\newcommand{\be}{\begin{equation}}
\newcommand{\ee}{\end{equation}}
\newcommand{\bea}{\begin{eqnarray}}
\newcommand{\eea}{\end{eqnarray}}
\newtheorem{thm}{Theorem}[section]
\theoremstyle{definition}
\theoremstyle{lemma}
\newtheorem{lem}{Lemma}[section]
\theoremstyle{example}
\theoremstyle{illustration}
\theoremstyle{proposition}
\newtheorem{pro}{Proposition}[section]
\theoremstyle{corollary}
\newtheorem{cor}{Corollary}[section]
\numberwithin{equation}{section}
\begin{document}
\date{}
\title{\textbf{Some Paranormed Difference Sequence Spaces Derived by Using Generalized Means}}
\author{Atanu Manna\footnote{Corresponding author e-mail: atanumanna@maths.iitkgp.ernet.in}, Amit Maji\footnote{Author's e-mail: amit.iitm07@gmail.com}, P. D. Srivastava \footnote{Author's e-mail: pds@maths.iitkgp.ernet.in}\\
\textit{\small{Department of Mathematics, Indian Institute of Technology, Kharagpur}} \\
\textit{\small{Kharagpur 721 302, West Bengal, India}}}
\maketitle
\vspace{20pt}
\begin{center}\textbf{Abstract}\end{center}
This paper presents new sequence spaces $X(r, s, t, p ;\Delta)$ for
$X \in \{ l_\infty(p), c(p), c_0(p), l(p)\}$ defined by using
generalized means and difference operator. It is shown that these
spaces are complete under a suitable paranorm. Furthermore, the
 $\alpha$-, $\beta$-, $\gamma$- duals of these sequence spaces are computed
 and also obtained necessary and sufficient conditions for some matrix transformations from $X(r, s, t, p ;\Delta)$ to $X$. Finally,
 it is proved that the sequence space $l(r, s, t, p ;\Delta)$ is rotund when $p_n>1$ for all $n$ and has the Kadec-Klee property.\\
\textit{2010 Mathematics Subject Classification}: 46A45, 46A35, 46B45.\\
\textit{Keywords:} Sequence spaces; Difference operator; Generalized means;
$\alpha$-, $\beta$-, $\gamma$- duals; Matrix transformation.
\section{Introduction}
The study of sequence spaces play an important role in several branches of analysis, namely, the structural theory of topological vector spaces, summability theory, Schauder basis theory etc. Besides this, the theory of sequence spaces is a powerful tool for obtaining some topological and geometrical results with the help of Schauder basis.

Let $w$ be the space of all real or complex sequences $x=(x_n)$, $n\in \mathbb{N}_0$. For an infinite matrix $A$ and a sequence space $\lambda$, the matrix domain of $A$, which is denoted by $\lambda_{A}$ and defined as $\lambda_A=\{x\in w: Ax\in \lambda\}$ \cite{WIL}. Basic methods, which are used to determine the topologies, matrix transformations and inclusion relations on sequence spaces can also be applied to study the matrix domain $\lambda_A$. Recently, there is an approach of forming new sequence spaces by using matrix domain of a suitable matrix and characterize the matrix mappings between these sequence spaces.

Let $(p_k)$ be a bounded sequence of strictly positive real numbers such that $H=\displaystyle\sup_k p_k$ and $M=\max \{1, H\}$. The linear spaces $l_\infty(p)$, $c(p), c_0(p)$ and $l(p)$ are introduced and studied by Maddox \cite{MAD}, where
\begin{align*}
&l_\infty(p)=\Big\{x=(x_k)\in w:  \displaystyle \sup_{k}|x_k|^{p_k}<\infty\Big\},\\
&c(p)=\Big\{x=(x_k)\in w:  \displaystyle \lim_{k\rightarrow\infty}|x_k-l|^{p_k}=0 \mbox{~~for some scalar~} l\Big\}\mbox{~and~},\\
&c_0(p)=\Big\{x=(x_k)\in w:  \displaystyle \lim_{k\rightarrow\infty}|x_k|^{p_k}=0 \Big\},\\
&l(p)=\Big\{x=(x_k)\in w:  \displaystyle \sum_{k=0}^{\infty}|x_k|^{p_k}<\infty\Big\}.
\end{align*}
The linear spaces $l_\infty(p)$, $c(p), c_0(p)$ are complete with the paranorm $g(x) = \displaystyle \sup_{k}|x_k|^{p_k \over M}$ if and only if $\inf p_k>0$ for all $k$ while $l(p)$ is complete with the paranorm $\tilde g(x) =
\displaystyle \Big(\sum_{k}|x_k|^{p_k }\Big)^{1 \over M}$.
Recently, several authors introduced new sequence spaces by using matrix domain. For example, Ba\c{s}ar et al. \cite{BAS2} studied the space $bs(p)=[l_\infty(p)]_S$, where $S$ is the summation matrix. Altay and Ba\c{s}ar \cite{ALT1} studied the sequence spaces $r^t(p)$ and $r_{\infty}^t(p)$, which consist of all sequences whose Riesz transform are in the spaces $l(p)$ and $l_\infty(p)$ respectively, i.e., $r^t(p)=[l(p)]_{R^t}$ and $r_{\infty}^t(p)= [l_{\infty}(p)]_{R^t}$. Altay and Ba\c{s}ar  also studied the sequence spaces $r_{c}^t(p)= [c(p)]_{R^t}$ and $r_{0}^t(p)= [c_0(p)]_{R^t}$ in \cite{ALT}.

Kizmaz first introduced and studied the difference sequence space in \cite{KIZ}.
Later on, many authors including Ahmad and Mursaleen \cite{AHM}, \c{C}olak and Et \cite{COL}, Ba\c{s}ar and Altay\cite{ALT} etc.
studied new sequence spaces defined by using difference operator. Using Euler and difference operator, Karakaya and Polat
introduced the paranormed sequence spaces $e_0^\alpha(p; \Delta),e^\alpha_c(p; \Delta)$ and $e_{\infty}^\alpha(p; \Delta)$ in \cite{KAR}.
Mursaleen and Noman \cite{MUR} introduced a sequence
space of generalized means, which includes most of the earlier known
sequence spaces.

In $2012$, Demiriz and \c{C}akan \cite{DEM} introduced the new paranormed difference sequence spaces $\lambda(u, v, p; \Delta)$ for $\lambda \in \{l_\infty(p), c(p), c_0(p), l(p)\}$, combining weighted mean and difference operator, defined as
$$\lambda(u, v, p; \Delta)=\Big\{x\in w: (G(u, v).\Delta) x \in \lambda\Big\},$$
where the matrices $G(u, v)=(g_{nk})$ and $\Delta=(\delta_{nk})$ are given by
\begin{align*}
g_{nk} &= \left\{
\begin{array}{ll}
    u_nv_k & \quad \mbox{~if~} 0\leq k \leq n,\\
    0 & \quad \mbox{~if~} k > n
\end{array}\right.& \mbox{and~}
\delta_{nk}& = \left\{
\begin{array}{ll}
   0 & \quad \mbox{~if~} 0\leq k <n-1,\\
    (-1)^{n-k} & \quad \mbox{~if~} n-1\leq k \leq n,\\
    0 & \quad \mbox{~if~}  k>n.
\end{array}\right.
\end{align*}
By using matrix domain, one can write $c_0(u, v, p; \Delta)=[c_0(p)]_{G(u, v;\Delta)}$, $c(u, v, p; \Delta)=[c(p)]_{G(u, v;\Delta)}$, $l_{\infty}(u, v, p; \Delta)=[l_{\infty}(p)]_{G(u, v;\Delta)}$ and $l(u, v, p; \Delta)=[l{(p)}]_{G(u, v;\Delta)}$.
%Change-In [ref.] Choudhary and Mishra have defined the sequence spaces $\overline{l(p)}$ which consists of all sequences such that $S$-transform are in $l(p)$, where the matrix $S=(s_{nk})$ is defined as
%$$s_{nk} = \left\{
%\begin{array}{ll}
%    1 & \quad 0\leq k \leq n,\\
%    0 & \quad k > n.
%\end{array}\right.$$

The aim of this present paper is to introduce and study new sequence spaces $X(r,s,t,p; \Delta)$ for $X \in \{l_\infty(p), c(p), c_0(p), $ $l(p)\}$. It is shown that these spaces are complete paranormed sequence spaces under some suitable paranorm. Some topological results and the $\alpha$-, $\beta$-, $\gamma$- duals of these spaces are obtained. A characterization of some matrix transformations between these new sequence spaces is established. It is also shown that the sequence space $l(r, s, t, p ;\Delta)$ is rotund when $p_n>1$ for all $n$ and has the Kadec-Klee property.

\section{Preliminaries}
 Let $l_\infty, c$ and $c_0$ be the spaces of all bounded, convergent and null sequences $x=(x_n)$ respectively, with norm $\|x\|_\infty=\displaystyle\sup_{n}|x_n|$. Let  $bs$ and $cs$ be the sequence spaces of all bounded and convergent series respectively. We denote by $e=(1, 1, \cdots)$ and $e_{n}$ for the sequence whose $n$-th term is $1$ and others are zero and $\mathbb{{N_{\rm 0}}}=\mathbb{N}\cup \{0\}$, where $\mathbb{N}$ is the set of all natural numbers.\\
For any subsets $U$ and $V$ of $w$, the multiplier space $M(U, V)$ of $U$ and $V$ is defined as
\begin{center}
$M(U, V)=\{a=(a_n)\in w : au=(a_nu_n)\in V ~\mbox{for all}~ u\in U\}$.
\end{center}
In particular,
\begin{center}
$U^\alpha= M(U, l_1)$, $U^\beta= M(U, cs)$ and $U^\gamma= M(U, bs)$
\end{center} are called the $\alpha$-, $\beta$- and $\gamma$- duals of $U$ respectively \cite{WIL}.

Let $A=(a_{nk})_{n, k}$ be an infinite matrix
with real or complex entries $a_{nk}$. We write $A_n$ as the
sequence of the $n$-th row of $A$, i.e.,
$A_n=(a_{nk})_{k}$ for every $n$.
For $x=(x_n)\in w$, the $A$-transform of $x$ is defined as the
sequence $Ax=((Ax)_n)$, where
\begin{center}
$A_n(x)=(Ax)_n=\displaystyle\sum_{k=0}^{\infty}a_{nk}x_k$,
\end{center}
provided the series on the right side converges for each $n$. For any two sequence spaces $U$ and $V$, we denote by $(U, V)$, the class of all infinite matrices $A$ that map $U$ into $V$. Therefore $A\in (U, V)$ if and only if $Ax=((Ax)_n)\in V$ for all $x\in U$. In other words, $A\in (U, V)$ if and only if $A_n \in U^\beta$ for all $n$ \cite{WIL}.
An infinite matrix $T={(t_{nk})}_{n,k}$ is said to be triangle if $t_{nk}=0$ for $k>n$ and $t_{nn}\neq 0$, $n\in \mathbb{{N_{\rm 0}}}$.

\section{Sequence spaces $X(r, s, t, p; \Delta)$ for $X \in \{ l_{\infty}(p), c(p), c_{0}(p), l(p)\}$.}
In this section, we first begin with the notion of generalized means given by Mursaleen et al. \cite{MUR}.\\
We denote the sets $\mathcal{U}$ and $\mathcal{U}_{0}$ as
\begin{center}
$ \mathcal{U} = \Big \{ u =(u_{n})_{n=0}^{\infty} \in w: u_{n} \neq
0~~ {\rm for~ all}~~ n \Big \}$ and $ \mathcal{U_{\rm 0}} = \Big \{ u
=(u_{n})_{n=0}^{\infty} \in w: u_{0} \neq 0 \Big \}.$
\end{center}
Let $r, t \in \mathcal{U}$ and $s \in \mathcal{U}_{0}$. The sequence $y=(y_{n})$ of generalized means of a sequence $x=(x_{n})$ is defined
by $$ y_{n}= \frac{1}{r_{n}}\sum_{k=0}^{n} s_{n-k}t_{k}x_{k} \qquad (n \in \mathbb{N_{\rm 0}}).$$
The infinite matrix $A(r, s, t)$ of generalized means is defined by

$$(A(r,s,t))_{nk} = \left\{
\begin{array}{ll}
    \frac{s_{n-k}t_{k}}{r_{n}} & \quad 0\leq k \leq n,\\
    0 & \quad k > n.
\end{array}\right. $$

Since $A(r, s, t)$ is a triangle, it has a unique inverse and the
inverse is also a triangle \cite{JAR}. Take $D_{0}^{(s)} =
\frac{1}{s_{0}}$ and

$ D_{n}^{(s)} =
\frac{1}{s_{0}^{n+1}} \left|
\begin{matrix}
    s_{1} & s_{0} &  0 & 0 \cdots & 0 \\
    s_{2} & s_{1} & s_{0}& 0 \cdots & 0 \\
    \vdots & \vdots & \vdots & \vdots    \\
    s_{n-1} & s_{n-2} & s_{n-3}& s_{n-4} \cdots & s_0 \\
      s_{n} & s_{n-1} & s_{n-2}& s_{n-3} \cdots & s_1
\end{matrix} \right| \qquad \mbox{for} $ $n\geq 1.$\\ \\

Then the inverse of $A(r, s, t)$ is the triangle $B= (b_{nk})_{n, k}$ which is defined as
$$b_{nk} = \left\{
\begin{array}{ll}
    (-1)^{n-k}~\frac{D_{n-k}^{(s)}}{t_{n}}r_{k} & \quad 0\leq k \leq n,\\
    0 & \quad k > n.
\end{array}\right. $$
Throughout this paper, we consider $p=(p_k)$ be a bounded sequence of strictly positive real numbers such that $H=\displaystyle\sup_kp_k$ and $M=\max\{1, H\}$. \\
We now introduce the sequence spaces $X(r, s, t, p; \Delta)$ for $X \in \{ l_{\infty}(p), c(p), c_{0}(p), l(p)\}$ as
$$ X(r, s, t, p; \Delta)= \bigg \{ x=(x_{k})\in w : \Big( \frac{1}{r_{n}}\sum_{k=0}^{n}s_{n-k}t_{k} \Delta {x_{k}}\Big)_{n} \in X \bigg  \},$$
which is a combination of generalized means and difference operator $\Delta$ such that $\Delta x_k =x_k -x_{k-1}$, $x_{-1}=0$. By using matrix domain, we can write $X(r, s,t, p; \Delta)=  X_{A(r, s,t; \Delta)}=\{x \in w :A(r, s,t; \Delta)x\in X\}$, where $A(r, s,t; \Delta)= A(r, s,t). \Delta$, product of two triangles $A(r, s,t)$ and $\Delta$. These sequence spaces include many well known sequence spaces studied by several authors as follows:
\begin{enumerate}[I.]
\item if $r_{n}=\frac{1}{u_{n}}$, $t_{n}=v_{n}$ and $s_{n}=1~ \forall~ n$, then the sequence spaces $X(r, s,t, p; \Delta)$ reduce to $ X(u, v, p ; \Delta)$ for $X \in \{ l_{\infty}(p), c(p), c_{0}(p), l(p)\}$  introduced and studied by Demiriz and \c{C}akan \cite{DEM}.
\item  if $t_{n}=1$, $s_{n}=1$ ~$\forall~ n$ and $r_{n}=n+1$, then the sequence space $ l(r, s,t, p; \Delta)$ reduces to the non absolute type sequence space $ X_p(\Delta)$ studied by Ba\c{s}arir \cite{BAS1}.
\item if $r_{n}= \frac{1}{n!}, $ $t_{n}=\frac{\alpha^n}{n!}$, $s_{n}=\frac{(1-\alpha)^n}{n!}$, where $0<\alpha<1$, then the sequence spaces $ X(r, s,t, p; \Delta)$ for $X \in \{ l_{\infty}(p), c(p), c_{0}(p)\}$ reduce to $e^\alpha_\infty(p; \Delta)$, $e^\alpha_c(p; \Delta)$ and $e^\alpha_0(p; \Delta)$ respectively introduced and studied by Karakaya and Polat \cite{KAR}.
\item if $ r_{n}=n+1,$ $t_{n}={1+ \alpha^n}$, $0<\alpha<1$ and $s_{n}=1$, $p_n=1$ $\forall n$, then the sequence spaces $ c(r, s,t, p ; \Delta)$ and $ c_0(r, s,t, p ; \Delta)$ reduce to the sequence spaces $a_c^\alpha(\Delta)$ and $a_0^\alpha(\Delta)$ respectively studied by Demiriz and \c{C}akan \cite{DEM1}.
\end{enumerate}

\section{Main results}
Throughout the paper, we denote the sequence spaces $X(r, s, t, p; \Delta)$ as $l(r, s, t, p; \Delta)$, $c_0(r, s, t, p; \Delta)$, $c(r, s, t, p; \Delta)$ and $l_{\infty}(r, s, t, p; \Delta)$
for $X =l(p),c_0(p), c(p)$ and $l_{\infty}(p)$ respectively.
\begin{thm}
$(a)$ The sequence space $l(r, s, t, p; \Delta)$ is a complete
linear metric space paranormed by $\tilde{h}$ defined as
\begin{center}
$\tilde{h}(x)=\Big(\displaystyle\sum_{n=0}^{\infty}\Big|\frac{1}{r_n}\displaystyle\sum_{k=0}^{n}s_{n-k}t_k\Delta
x_k\Big|^{p_n}\Big)^\frac{1}{M}$.
\end{center}
$(b)$ The sequence spaces $X(r,s, t, p ; \Delta)$ for $X\in
\{l_{\infty}(p)$, $ c(p), c_{0}(p)\}$ are complete linear metric
spaces paranormed by $h$ defined as
\begin{center}
$h(x)=\displaystyle\sup_{n}\Big|\frac{1}{r_n}\displaystyle\sum_{k=0}^{n}s_{n-k}t_k\Delta x_k\Big|^\frac{p_n}{M}$.
\end{center}
\end{thm}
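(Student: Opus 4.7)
The plan is to exploit the fact that $T:=A(r,s,t;\Delta)=A(r,s,t)\cdot\Delta$ is a product of two triangles, hence itself a triangle and therefore a linear bijection of $w$ onto $w$ whose inverse is again a triangle (explicitly obtainable from $B$ and $\Delta^{-1}$). Consequently $X(r,s,t,p;\Delta)=X_{T}$ is a matrix domain, and the two candidate paranorms are exactly the pull-backs $\tilde h(x)=\tilde g(Tx)$ and $h(x)=g(Tx)$, where $\tilde g$ and $g$ are the standard Maddox paranorms on $l(p)$ and on $l_\infty(p),c(p),c_0(p)$ recalled in Section~1. Once this is recognized, parts (a) and (b) reduce to the general principle that a triangle matrix transports a complete paranormed structure from the base space to its matrix domain.

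First I would verify the paranorm axioms for $\tilde h$. Linearity of $T$ gives $\tilde h(\theta)=\tilde g(\theta)=0$ and $\tilde h(-x)=\tilde g(-Tx)=\tilde g(Tx)=\tilde h(x)$. Subadditivity reads $\tilde g(T(x+y))\le \tilde g(Tx)+\tilde g(Ty)$, which is precisely the Minkowski-type inequality that makes $\tilde g$ a paranorm on $l(p)$ (using $p_n/M\le 1$). For continuity of scalar multiplication, if $\alpha_i\to\alpha$ and $\tilde h(x^{(i)}-x)\to 0$, then $Tx^{(i)}\to Tx$ in $\tilde g$, and the corresponding continuity property of $\tilde g$ on $l(p)$ yields $\tilde g(\alpha_i Tx^{(i)}-\alpha Tx)\to 0$, i.e. $\tilde h(\alpha_i x^{(i)}-\alpha x)\to 0$. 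To see that $\tilde h$ is a total paranorm, note that $\tilde h(x)=0$ implies $Tx=\theta$ (since $\tilde g$ is total on $l(p)$), and injectivity of the triangle $T$ forces $x=\theta$. The axioms for $h$ in part (b) are verified identically, invoking the fact (noted in Section~1) that $g$ is a total paranorm on $l_\infty(p),c(p),c_0(p)$ under the standing hypothesis $\inf p_k>0$.

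For completeness of $l(r,s,t,p;\Delta)$, let $(x^{(i)})$ be a Cauchy sequence under $\tilde h$. Then $y^{(i)}:=Tx^{(i)}$ is Cauchy in $l(p)$ under $\tilde g$, and by completeness of $l(p)$ it converges to some $y\in l(p)$. Define $x:=T^{-1}y\in w$; since $T$ is a triangle the inverse is computed one coordinate at a time, so $x$ is well-defined. Then $Tx=y\in l(p)$ shows $x\in l(r,s,t,p;\Delta)$, and
\[
\tilde h(x^{(i)}-x)=\tilde g(Tx^{(i)}-Tx)=\tilde g(y^{(i)}-y)\longrightarrow 0.
\]
The three spaces in part (b) are handled by the same argument, replacing $(l(p),\tilde g)$ by the relevant $(X,g)$.

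I do not expect any serious obstacle: the entire statement is a clean instance of the transport-of-structure principle for matrix domains over triangles. The only point requiring a moment's care is the totality of the paranorm, which uses injectivity of $T$, and the continuity of scalar multiplication, which must be deduced from the same property of the base paranorm rather than from a norm-type homogeneity (since in general $p_k\neq p_j$).
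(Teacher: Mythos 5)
Your argument is correct, but it takes a different route from the paper. The paper verifies everything by hand: subadditivity of $\tilde h$ via Minkowski's inequality applied to the transformed sequences, a direct estimate $\tilde h(\alpha_m x^m-\alpha x)\le|\alpha_m-\alpha|\tilde h(x^m)+|\alpha|\tilde h(x^m-x)$ for continuity of scalar multiplication, and an explicit Cauchy-sequence argument in which the coordinates $(A(r,s,t;\Delta)x^m)_n$ are shown to be Cauchy scalars, the limit transform is assembled coordinatewise, and membership of the limit in $l(r,s,t,p;\Delta)$ is checked at the end; in effect the paper re-proves the completeness of $l(p)$ in pulled-back form. You instead observe that $T=A(r,s,t;\Delta)$ is a triangle, hence a linear bijection of $w$ with a triangular inverse, that $X(r,s,t,p;\Delta)=X_T=T^{-1}(X)$, and that $\tilde h=\tilde g\circ T$, $h=g\circ T$, so all paranorm axioms and completeness transport from the Maddox spaces cited in the introduction. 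This is the cleaner and more modular argument (it also subsumes the isomorphism statement the paper proves separately as Theorem 4.2, and, unlike the paper, you explicitly handle totality of the paranorm via injectivity of $T$, which is what actually yields a metric); what it buys at a price is reliance on the completeness of $l(p)$, $c_0(p)$, $c(p)$, $l_\infty(p)$ as a black box, whereas the paper's computation is self-contained. One point to keep flagged, as you do: for $X\in\{l_\infty(p),c(p)\}$ the base paranorm $g$ has continuous scalar multiplication only under $\inf_k p_k>0$, a hypothesis the paper uses implicitly but never states in Theorem 4.1(b), so your proof is no worse off than the paper's on this score.
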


\begin{proof}We prove the part $(a)$ of this theorem. In
a similar way, we can prove the part $(b)$. \\Let
$x$, $y\in l(r, s, t, p; \Delta)$. Using Minkowski's inequality
%and $\alpha, \beta\in \mathbb{K} (=\mathbb{R}$ or $\mathbb{C})$. Then
\begin{align}
\bigg(\displaystyle\sum_{n=0}^{\infty}\Big|\frac{1}{r_n}\displaystyle\sum_{k=0}^{n}s_{n-k}t_k\Delta (x_k+ y_k)\Big|^{p_n}\bigg)^\frac{1}{M} & \leq
\bigg(\displaystyle\sum_{n=0}^{\infty}\Big|\frac{1}{r_n}\displaystyle\sum_{k=0}^{n}s_{n-k}t_k\Delta x_k\Big|^{p_n}\bigg)^\frac{1}{M}\nonumber\\  & ~~+ \bigg(\displaystyle\sum_{n=0}^{\infty}\Big|\frac{1}{r_n}\displaystyle\sum_{k=0}^{n}s_{n-k}t_k\Delta y_k\Big|^{p_n}\bigg)^\frac{1}{M}<\infty,
\end{align}so we have $x+y \in l(r, s, t, p; \Delta)$.\\
Let $\alpha$ be any scalar. Since $|\alpha|^{p_k}\leq \max\{1, |\alpha|^M\}$ for any scalar $\alpha$, we have
 $\tilde{h}(\alpha x)\leq \max\{1, |\alpha|\}\tilde{h}(x)$. Hence $\alpha x \in l(r, s, t, p; \Delta)$.
It is trivial to show that $\tilde{h}(\theta)=0$, $\tilde{h}(-x)=\tilde{h}(x)$ for all $x\in l(r, s, t, p; \Delta)$
 and subadditivity of $\tilde{h}$, i.e., $\tilde{h}(x+y)\leq \tilde{h}(x)+ \tilde{h}(y)$ follows from $(4.1)$.\\
 Next we show that the scalar multiplication is continuous.
% $(x^m)$ be a Cauchy sequence in $X(r, s, t; \Delta)$, where $x^{m}= (x_k^{m})=(x_0^{m}, x_1^{m}, x_2^{m}, \ldots )$ $\in X(r, s, t; \Delta)$,
Let $(x^m)$ be a sequence in $l(r, s,
t, p; \Delta)$, where $x^{m}= (x_k^{m})=(x_0^{m}, x_1^{m}, x_2^{m}, \ldots )$ $\in l(r, s, t, p; \Delta)$ for each $m\in \mathbb{N}_0$ such that $\tilde{h}(x^{m}-x)\rightarrow0$ as
$m\rightarrow\infty$ and $(\alpha_m)$ be a sequence of scalars such
that $\alpha_m\rightarrow \alpha$ as $m\rightarrow\infty$. Then
$\tilde{h}(x^{m})$ is bounded that follows from the following
inequality
\begin{center}
$\tilde{h}(x^{m})\leq \tilde{h}(x)+ \tilde{h}(x-x^{m})$.
\end{center}
Now consider
\begin{align*}
\tilde{h}(\alpha_mx^{m}-\alpha x) & = \bigg(\displaystyle\sum_{n=0}^{\infty}\Big|\frac{1}{r_n}\displaystyle\sum_{k=0}^{n}s_{n-k}t_k\Delta (\alpha_mx^{m}_k-\alpha x_k)\Big|^{p_n}\bigg)^\frac{1}{M} \\ &=
\bigg(\displaystyle\sum_{n=0}^{\infty}\Big|\frac{1}{r_n}\displaystyle\sum_{k=0}^{n}s_{n-k}t_k\Delta \big((\alpha_m-\alpha)x^{m}_k + \alpha (x^{m}_k- x_k)\big)\Big|^{p_n}\bigg)^\frac{1}{M}\nonumber\\  & \leq |\alpha_m-\alpha| \tilde{h}(x^{m}) + |\alpha| \tilde{h}(x^{m}-x)\rightarrow 0 \mbox{~as~} m\rightarrow\infty.
\end{align*}This shows that the scalar multiplication is continuous. Hence $\tilde{h}$ is a paranorm on the space $l(r, s, t, p; \Delta)$.\\
Now we show that the completeness of the space $l(r, s, t, p;
\Delta)$ with respect to the paranorm $\tilde{h}$. Let
$(x^{m})$ be a Cauchy sequence in $l(r,
s, t, p; \Delta)$. So for every $\epsilon>0$ there is a
$n_0 \in \mathbb{N}$ such that
\begin{center}
$\tilde{h}(x^{m}-x^{l})< \frac{\epsilon}{2}$ ~ for all $m, l\geq
n_0$.
\end{center}
Then by definition of $\tilde{h}$, we have for each $n$
\begin{equation}
\displaystyle \bigg|(A(r, s, t; \Delta)x^{m})_n -(A(r, s, t; \Delta)x^{l})_n \bigg| \leq
 \bigg(\displaystyle\sum_{n=0}^{\infty}\bigg|(A(r, s, t; \Delta)x^{m})_n -(A(r, s, t; \Delta)x^{l})_n \bigg|^{p_n} \bigg)^{1 \over M} <
\frac{\epsilon}{2}
\end{equation}
~\mbox{for all}~ $m, l\geq n_0,$ which implies that the
sequence $((A(r, s, t; \Delta)x^{m})_n)$ is a
Cauchy sequence of scalars for each fixed $n$
and hence converges for each $n$. We write
$$ \displaystyle\lim_{m \rightarrow \infty} (A(r, s, t; \Delta)x^{m})_n = (A(r, s, t; \Delta)x)_n \quad ( n \in \mathbb{N_{\rm 0}}).$$
Now taking $l \rightarrow \infty$ in (4.2), we obtain
$$\bigg(\displaystyle\sum_{n=0}^{\infty}\bigg|(A(r, s, t; \Delta)x^{m})_n -(A(r, s, t; \Delta)x)_n \bigg|^{p_n} \bigg)^{1 \over M} <
\epsilon$$
~\mbox{for all}~ $m \geq n_0$ and each fixed $n$. Thus $(x^{m})$ converges to $x$ in $l(r, s, t, p; \Delta)$
with respect to $\tilde{h}$.\\
%$(A(r, s, t; \Delta)x)_n \in l(p)$,
 To show  $x\in l(r, s, t, p;
\Delta)$, we take
\begin{align*}
\Big(\displaystyle\sum_{n=0}^{\infty}\Big|\frac{1}{r_n}\displaystyle\sum_{k=0}^{n}s_{n-k}t_k\Delta x_k\Big|^{p_n}\Big)^\frac{1}{M}
&= \Big(\displaystyle\sum_{n=0}^{\infty}\Big|\frac{1}{r_n}\displaystyle\sum_{k=0}^{n}s_{n-k}t_k\Delta (x_k -x_{k}^{m} + x_{k}^{m}) \Big|^{p_n}\Big)^\frac{1}{M}\\
& \leq \Big(\displaystyle\sum_{n=0}^{\infty}\Big|\frac{1}{r_n}\displaystyle\sum_{k=0}^{n}s_{n-k}t_k\Delta (x_k - x_{k}^{m})\Big|^{p_n}\Big)^\frac{1}{M}\\
& ~~+ \Big(\displaystyle\sum_{n=0}^{\infty}\Big|\frac{1}{r_n}\displaystyle\sum_{k=0}^{n}s_{n-k}t_k\Delta x_k^{m}\Big|^{p_n}\Big)^\frac{1}{M}\\
&= \tilde{h}(x -x^{m}) + \tilde{h}(x^{m}) < \infty \quad
\mbox{for all}~ m \geq n_0.
\end{align*}Therefore $x\in l(r, s, t, p; \Delta)$. This completes
the proof.
\end{proof}

\begin{thm}
The sequence spaces $X(r,s,t, p; \Delta)$ for $X \in \{
l_{\infty}(p)$, $ c(p), c_{0}(p), l(p)\}$ are linearly isomorphic to
the spaces $X \in \{ l_{\infty}(p)$, $ c(p), c_{0}(p), l(p)\}$
respectively, i.e., $l_{\infty}(r, s, t, p ; \Delta) \cong
l_{\infty}(p)$,  $c(r, s, t, p ; \Delta) \cong c(p)$, $c_{0}(r, s,
t, p ; \Delta) \cong c_{0}(p)$ and $l(r,s,t, p; \Delta) \cong l(p)$.
\end{thm}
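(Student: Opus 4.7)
The plan is to exhibit, for each of the four cases, the same canonical map and check it is a linear isomorphism. Define $T\colon X(r,s,t,p;\Delta)\to X$ by $Tx = A(r,s,t;\Delta)\,x$, whose $n$-th coordinate is $\frac{1}{r_n}\sum_{k=0}^{n}s_{n-k}t_k\,\Delta x_k$. Linearity is immediate from the linearity of the matrix action and of $\Delta$. Moreover, $T$ is paranorm-preserving essentially by construction: comparing the formulas for $\tilde h$ (respectively $h$) from the previous theorem with the Maddox paranorms $\tilde g$ on $l(p)$ (respectively $g$ on $l_\infty(p)$, $c(p)$, $c_0(p)$) recalled in the Introduction, one sees at once that $\tilde h(x)=\tilde g(Tx)$ and $h(x)=g(Tx)$. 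In particular, $T$ is continuous with continuous inverse on its range.

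Injectivity is automatic because $A(r,s,t;\Delta)$ is the product of two triangles, hence itself a triangle with nonzero diagonal entries $\frac{s_0 t_n}{r_n}$, so its kernel in $w$ is trivial. The substantive step is surjectivity, and this is where I would rely on the explicit triangle inverse $B=(b_{nk})$ of $A(r,s,t)$ recalled in Section~3 via the determinants $D_{n}^{(s)}$. Write $\Sigma$ for the summation matrix $\Sigma_{nk}=1$ for $0\le k\le n$, which inverts $\Delta$. Given $y\in X$, I would define
\[
 x_n \;:=\; \sum_{j=0}^{n}(By)_j \;=\; \sum_{j=0}^{n}\sum_{k=0}^{j}(-1)^{j-k}\,\frac{D_{j-k}^{(s)}}{t_j}\,r_k\,y_k,
\]
i.e.\ $x=\Sigma B y$. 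All sums are finite, so $x\in w$, and the factorization $A(r,s,t;\Delta)=A(r,s,t)\cdot\Delta$ together with $A(r,s,t)\,B=I$ and $\Delta\,\Sigma=I$ yields $(A(r,s,t;\Delta)\,x)_n = y_n$ for every $n$. Hence $x\in X(r,s,t,p;\Delta)$ with $Tx=y$, so $T$ is surjective.

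Combining these steps, $T$ is a paranorm-preserving linear bijection, therefore a linear (and topological) isomorphism between $X(r,s,t,p;\Delta)$ and $X$, for each choice $X\in\{l_\infty(p),c(p),c_0(p),l(p)\}$. The argument is identical in the four cases; only the paranorm on the codomain changes. I do not expect any genuine obstacle here: the only piece of bookkeeping is confirming that $x=\Sigma B y$ really is a preimage, which is a routine unwinding of the fact that triangles compose to triangles whose inverses are the reverse composition of inverses; the analytical content of the theorem has already been absorbed into the construction of $A(r,s,t;\Delta)$ and its triangle inverse.
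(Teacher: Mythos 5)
Your proposal is correct and follows essentially the same route as the paper: the paper uses the identical map $T=A(r,s,t;\Delta)$, notes linearity and injectivity, and establishes surjectivity by writing $x=\Delta^{-1}A(r,s,t)^{-1}y$ explicitly (its formula $(4.3)$ coincides with your $x=\Sigma B y$ after reindexing), then observes $\tilde h(x)=\tilde g(y)$ so that $T$ is paranorm preserving. The only cosmetic difference is that the paper writes out the case $X=l(p)$ and declares the others analogous, whereas you treat the four cases uniformly.
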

\begin{proof}
We prove the theorem only for the case when $X=l(p)$. For
this, we need to show that there exists a bijective linear map from
$l(r, s, t, p ;\Delta)$ to $l(p)$. Now we define a map $T:l(r, s, t,
p ; \Delta) \rightarrow l(p)$ by $x \mapsto Tx = y=(y_n)$, where
\begin{center}\label{k1}
$\displaystyle y_n= \frac{1}{r_{n}}\sum_{k=0}^{n}s_{n-k}t_{k} \Delta {x_{k}}$.
\end{center}
Since $\Delta$ is a linear operator, so the linearity of $T$ is
trivial. It is easy to see that $Tx=0$ implies $x=0$. Thus $T$ is
injective. To prove $T$ is surjective, let $y \in l(p)$. Since $y= (A(r, s, t). \Delta)x$, i.e.,
$$x= \big(A(r, s, t). \Delta\big)^{-1}y=\Delta^{-1}.A(r, s, t)^{-1}y, $$
we can get a sequence $x=(x_{n})$ as
\begin{equation}
 x_n = \sum_{j=0}^{n}\sum_{k=0}^{n-j}
(-1)^{k} \frac{D_{k}^{(s)}}{t_{k+j}} r_{j}y_{j} \qquad (n \in
\mathbb{N}_0).
\end{equation}
 Then
\begin{center}
$\tilde{h}(x)=\bigg(\displaystyle\sum_{n=0}^{\infty}\Big|\frac{1}{r_n}\displaystyle\sum_{k=0}^{n}s_{n-k}t_k\Delta x_k\Big|^{p_n}\bigg)^\frac{1}{M}=\Big(\displaystyle\sum_{n=0}^{\infty}\big|y_n\big|^{p_n}\Big)^\frac{1}{M}=\tilde{g}(y)<\infty$.
\end{center}
Thus $x \in l(r, s, t, p; \Delta)$ and this shows that $T$ is
surjective. Hence $T$ is a linear bijection from $l(r, s, t, p;
\Delta)$ to $l(p)$. Also $T$ is paranorm preserving. This completes
the proof.
\end{proof}

\subsection{The $\alpha$-, $\beta $-, $\gamma$-duals of $X(r,s, t, p; \Delta)$ for
$X\in\{l_\infty(p), c(p), c_0(p), l(p)\}$}
In $1999$, K. G. Grosse-Erdmann \cite{GRO} has characterized the matrix transformations between the sequence spaces of Maddox, namely, $l_\infty(p), c(p), c_0(p)$ and $ l(p)$.
To compute the $\alpha$-, $\beta $-, $\gamma$-duals of $X(r,s, t, p; \Delta)$ for
$X\in\{l_\infty(p), c(p), c_0(p), l(p)\}$ and to characterize the classes of matrix mappings between these spaces, we list the following conditions.\\

Let $L$, $N$ be any two natural numbers, $F$ denotes finite subset of $\mathbb{N}_0$ and $\alpha, \alpha_k$ are complex numbers. Let $p=(p_k)$, $q=(q_k)$ be bounded sequences of strictly positive real numbers and $A=(a_{nk})_{n, k}$ be an infinite matrix. We put $K_1=\{k\in \mathbb{N}_0: p_k\leq 1\}$
and $K_2=\{k\in \mathbb{N}_0: p_k> 1\}$ and $p_k'= \frac{p_k}{p_k-1}$ for $k\in K_2$.

\begin{align}
& \displaystyle\sup_{F}\displaystyle\sup_{k\in K_1}\Big|\sum_{n\in F}a_{nk}\Big|^{p_k}<\infty\\
& \exists L~ \displaystyle\sup_{F}\displaystyle\sum_{k\in K_2}\Big|\sum_{n\in F}a_{nk}L^{-1}\Big|^{p_k'}<\infty\\
& \displaystyle\lim_{n} |a_{nk}|^{q_n}=0 \mbox{~for all~} k\\
& \forall L~ \displaystyle\sup_{n}\displaystyle\sup_{k\in K_1}\Big|a_{nk}L^{\frac{1}{q_n}}\Big|^{p_k}<\infty\\
& \forall L~ \exists N~ \displaystyle\sup_{n}\displaystyle\sum_{k\in K_2}\Big|a_{nk}L^{\frac{1}{q_n}}N^{-1}\Big|^{p_k'}<\infty\\
& \displaystyle\sup_{n}\displaystyle\sup_{k\in K_1}|a_{nk}|^{p_k}<\infty\\
& \exists N \displaystyle\sup_{n}\displaystyle\sum_{k\in K_2}\Big|a_{nk}N^{-1}\Big|^{p_k'}<\infty\\
& \exists (\alpha_k)~ \displaystyle\lim_{n}|a_{nk}-\alpha_k|^{q_n}=0 \mbox{~for all~} k\\
& \exists (\alpha_k)~\forall L~ \displaystyle\sup_{n}\displaystyle\sup_{k\in K_1}\Big(|a_{nk}-\alpha_k|L^{\frac{1}{q_n}}\Big)^{p_k}<\infty\\
& \exists (\alpha_k)~\forall L~\exists N~ \displaystyle\sup_{n}\displaystyle\sum_{k\in K_2}\big(|a_{nk}-\alpha_k|L^{\frac{1}{q_n}}N^{-1}\big)^{p_k'}<\infty\\
& \exists L~ \displaystyle\sup_{n}\displaystyle\sup_{k\in K_1}\Big|a_{nk}L^{-\frac{1}{q_n}}\Big|^{p_k}<\infty\\
& \exists L~ \displaystyle\sup_{n}\displaystyle\sum_{k\in K_2}\Big|a_{nk}L^{-\frac{1}{q_n}}\Big|^{p_k'}<\infty\\
& \exists N~ \displaystyle\sup_{F}\displaystyle\sum_{n}\Big|\sum_{k\in F}a_{nk}N^{-\frac{1}{p_k}}\Big|<\infty\\
& \forall L~ \exists N~ \displaystyle\sup_{n} L^{\frac{1}{q_n}} \displaystyle\sum_{k} |a_{nk}|N^{-\frac{1}{p_k}}<\infty\\
& \exists N~ \displaystyle\sup_{n}\displaystyle\sum_{k}|a_{nk}|N^{-\frac{1}{p_k}}<\infty\\
& \exists (\alpha_k)~\forall L~\exists N~ \displaystyle\sup_{n} L^{\frac{1}{q_n}}\displaystyle\sum_{k} |a_{nk}-\alpha_k|N^{-\frac{1}{p_k}}<\infty
\end{align}
\begin{align}
& \exists N~ \displaystyle\sup_{n}\bigg(\displaystyle\sum_{k}|a_{nk}|N^{-\frac{1}{p_k}}\bigg)^{q_n}<\infty\\
& \displaystyle\sum_{n}\Big|\sum_{k}a_{nk}\Big|<\infty\\
& \displaystyle\lim_{n}\Big|\sum_{k}a_{nk}\Big|^{q_n}=0\\
& \exists \alpha~ \displaystyle\lim_{n}\Big|\sum_{k}a_{nk}-\alpha\Big|^{q_n}=0\\
& \displaystyle\sup_{n} \Big|\sum_{k}a_{nk}\Big|^{q_n}<\infty\\
& \forall N~ \displaystyle\sup_{F}\displaystyle\sum_{n} \Big|\sum_{k\in F}a_{nk}N^{\frac{1}{p_k}}\Big|<\infty\\
& \forall N~ \displaystyle\lim_{n}\bigg(\displaystyle\sum_{k}|a_{nk}|N^{\frac{1}{p_k}}\bigg)^{q_n}=0\\
& \forall N~ \displaystyle\sup_{n}\displaystyle\sum_{k} |a_{nk}|N^{\frac{1}{p_k}}<\infty\\
&  \exists (\alpha_k)~ \forall N~ \displaystyle\lim_{n} \bigg(\displaystyle\sum_{k}|a_{nk}-\alpha_k|N^{\frac{1}{p_k}}\bigg)^{q_n}=0\\
& \forall N~ \displaystyle\sup_{n}\bigg(\displaystyle\sum_{k}|a_{nk}|N^{\frac{1}{p_k}}\bigg)^{q_n}<\infty.
\end{align}
\begin{lem}\cite{GRO} {\label{lem1}}
$(i)$ $A \in (l(p), l_1)$ if and only if $(4.4)$ and $(4.5)$ hold.\\
$(ii)$ $A\in (l(p), c_0(q))$ if and only if $(4.6)$, $(4.7)$ and $(4.8)$ hold.\\
$(iii)$ $A\in (l(p), c(q))$ if and only if $(4.9)$, $(4.10)$, $(4.11)$,  $(4.12)$ and $(4.13)$ hold.\\
$(iv)$ $A\in (l(p), l_{\infty}(q))$ if and only if $(4.14)$ and $(4.15)$ hold.\\
\end{lem}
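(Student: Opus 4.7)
The plan is to establish each of the four biconditionals by the same general scheme, which reflects the anatomy of the conditions: throughout, I exploit the partition $\mathbb{N}_0=K_1\cup K_2$, applying the elementary inequality $|\sum x_k|^{p_k}\le\sum|x_k|^{p_k}$ on $K_1$ and H\"older's inequality with conjugate exponent $p_k'=p_k/(p_k-1)$ on $K_2$. This split is precisely why each characterisation comes in paired conditions -- one supremum-type clause for $K_1$ and one $p_k'$-summability clause for $K_2$. The second ingredient is the standard reparametrisation: any $x\in l(p)$ can be written as $x_k=\lambda_k N^{-1/p_k}$ with $(\lambda_k)$ in a bounded region and $N$ depending on $x$, so bounds involving $N^{-1/p_k}$ in the conditions arise naturally from testing against such sequences. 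The third ingredient is the closed graph theorem: since $l(p)$ and each of $l_1,c_0(q),c(q),l_\infty(q)$ are complete, any matrix map $A$ between them is automatically continuous, allowing us to convert pointwise hypotheses into uniform ones.

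For part (i), sufficiency follows by writing $\sum_n|(Ax)_n|$ as the supremum over finite $F\subset\mathbb{N}_0$ of $\sum_{k}\bigl|\sum_{n\in F}a_{nk}\bigr||x_k|$, and applying the $K_1/K_2$ split to bound this by the product of (4.4)/(4.5) with the $l(p)$-paranorm of $x$. Necessity comes by testing $A$ on the weighted, signed, finitely-supported sequence $x_k=\varepsilon_k L^{-1/p_k}\chi_F(k)$ with appropriately chosen $\varepsilon_k$; continuity of $A:l(p)\to l_1$ then forces the suprema over $F$ in (4.4) and (4.5) to be finite. Parts (ii) and (iv) follow the same strategy applied to $|(Ax)_n|^{q_n}$: the pointwise condition (4.6) comes from testing on unit vectors $e_k$, while (4.7), (4.8), (4.14), (4.15) encode the uniform-in-$n$ continuity of the rows $A_n$ as functionals on $l(p)$, with the weight $L^{1/q_n}$ appearing because one must absorb the exponent $q_n$ into the $p_k$-duality on the input side.

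Part (iii) is the most delicate. First, applying $A$ to each $e_k$ and using that convergent sequences in $c(q)$ converge coordinatewise furnishes the pointwise limits $(\alpha_k)$ in (4.9). Writing $A=(\alpha_k)+(A-(\alpha_k))$ where the row-constant part handles the scalar limit and the residual maps into $c_0(q)$, I would reduce (4.10)--(4.11) to part (ii) applied to the matrix $(a_{nk}-\alpha_k)$. For (4.12)--(4.13), the scalar limit of $\sum_k a_{nk}$ must be extracted by testing against suitable $l(p)$-sequences that approximate constants; since $e\notin l(p)$ in general, one uses truncations weighted by $N^{-1/p_k}$ and passes to the limit carefully, the decomposition $Ax=(A-(\alpha_k))x+\sum_k\alpha_k x_k$ being the key identity.

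The main obstacle will be the quantifier bookkeeping between the "$\exists L$'' / "$\forall L$'' / "$\exists N$'' clauses. Each $x\in l(p)$ comes with its own scale constant (roughly $\sum|x_k|^{p_k}$), and each Hölder application on $K_2$ introduces a second constant that must be uniform in $n$; matching these to the quantifier patterns in (4.4)--(4.15) -- in particular ensuring that the "$\forall L$'' in (4.7) and (4.8) is extracted from the fact that $x\in l(p)$ is arbitrary, while the "$\exists L$'' in (4.5) comes from a \emph{single} continuous operator bound -- is where care is required. Once the duality $[l(p)]^\beta$ is explicitly computed in this language, each equivalence reduces to verifying that the row functionals $A_n$ lie in $[l(p)]^\beta$ uniformly in $n$ in the target-space-adapted sense, which the listed conditions encode exactly.
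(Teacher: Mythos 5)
First, a point of reference: the paper does not prove this lemma at all---it is quoted verbatim from Grosse-Erdmann \cite{GRO} as a known characterization, so there is no in-paper argument to compare against; your proposal has to stand on its own as a reconstruction of the literature proof. As an outline it picks the right ingredients (the $K_1/K_2$ split with H\"older on $K_2$, testing on unit vectors and on sequences of the form $\varepsilon_k N^{-1/p_k}\chi_F(k)$, and the decomposition $A=(\alpha_k)+(A-(\alpha_k))$ for the convergent case), but it contains concrete errors and leaves the real content unproved. In part (i), the claimed identity $\sum_n|(Ax)_n|=\sup_F\sum_k\bigl|\sum_{n\in F}a_{nk}\bigr|\,|x_k|$ is false; the standard route is the two-sided estimate $\sup_F\bigl|\sum_{n\in F}(Ax)_n\bigr|\le\sum_n|(Ax)_n|\le 4\sup_F\bigl|\sum_{n\in F}(Ax)_n\bigr|$ together with $\sum_{n\in F}(Ax)_n=\sum_k\bigl(\sum_{n\in F}a_{nk}\bigr)x_k$, which is exactly what makes the conditions $(4.4)$--$(4.5)$ (which involve $\bigl|\sum_{n\in F}a_{nk}\bigr|$, not $\sum_{n\in F}|a_{nk}|$) both necessary and sufficient; without this step your necessity argument does not reach the stated form of the conditions.

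In part (iii) you have misread the conditions: $(4.9)$--$(4.10)$ are boundedness conditions on $A$ itself (the $(l(p),l_\infty)$-type conditions), $(4.11)$ is the coordinatewise-limit condition, and $(4.12)$--$(4.13)$ are the uniform ``$\forall L$'' conditions on $a_{nk}-\alpha_k$, i.e.\ exactly part (ii) applied to the shifted matrix; none of them involves a scalar limit of the row sums $\sum_k a_{nk}$. The row-sum conditions $(4.21)$--$(4.24)$ belong to the characterizations with domain $c(p)$, where the constant sequence $e$ lies in the domain; since here the domain is $l(p)$, your proposed step of ``approximating constants by truncations weighted by $N^{-1/p_k}$'' is both unnecessary and not available, so that portion of the plan would not produce anything matching the stated conditions. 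Finally, the part you explicitly defer---the quantifier bookkeeping distinguishing the ``$\exists L$'' in $(4.5)$, $(4.14)$--$(4.15)$ from the ``$\forall L$'' in $(4.7)$--$(4.8)$ and $(4.12)$--$(4.13)$---is precisely the mathematical substance of the lemma (it is where a gliding-hump or uniform-boundedness argument is needed), and your appeal to the closed graph theorem is delicate here because $c_0(q)$, $c(q)$ and $l_\infty(q)$ need not be complete in their natural paranorms unless $\inf_n q_n>0$. As it stands the proposal is a plausible programme, not a proof, and two of its concrete steps (the $l_1$ identity and the treatment of $(4.12)$--$(4.13)$) would fail as written.
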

\begin{lem}\cite{GRO}{\label{lem2}}
$(i)$ $A \in (c_0(p), l_1)$ if and only if $(4.16)$ holds.\\
$(ii)$ $A\in (c_0(p), c_0(q))$ if and only if $(4.6)$ and $(4.17)$ hold.\\
$(iii)$ $A\in (c_0(p), c(q))$ if and only if $(4.11)$, $(4.18)$ and $(4.19)$ hold.\\
$(iv)$ $A\in (c_0(p), l_{\infty}(q))$ if and only if $(4.20)$ holds.\\
\end{lem}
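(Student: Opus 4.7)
The plan is to handle each of the four characterizations by the standard necessity/sufficiency decomposition. Although the target spaces and the associated conditions differ, the method is uniform throughout.

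For necessity, I would use continuity of the induced map $L_A\colon c_0(p)\to Y$, which follows from the closed graph theorem once one observes that each coordinate functional $x\mapsto (Ax)_n$ is continuous on $c_0(p)$. The coordinate vectors $e_k\in c_0(p)$ immediately give the column-wise conditions $(4.6)$ and $(4.11)$. The delicate uniform conditions are extracted by feeding $A$, for each finite $F\subset\mathbb{N}_0$ and each sign choice $\varepsilon_k\in\{-1,+1\}$, the test sequence $x_k=\varepsilon_k N^{-1/p_k}$ for $k\in F$ and $x_k=0$ otherwise; one has $|x_k|^{p_k}\le N^{-1}$, so $x$ belongs to an arbitrarily small neighborhood of $0$ in $c_0(p)$ once $N$ is large. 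Applying $A$ to these test sequences and then optimizing over $\varepsilon_k$ and $F$ forces the existentially quantified uniform bounds appearing in $(4.16)$, $(4.17)$, $(4.19)$, and $(4.20)$.

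For sufficiency, I would fix an arbitrary $x\in c_0(p)$ and use $|x_k|^{p_k}\to 0$ to obtain $N_x$ with $|x_k|\le N_x^{1/p_k}$ for every $k$. The splitting $a_{nk}x_k=(a_{nk}N^{-1/p_k})\cdot(N^{1/p_k}x_k)$ then pairs the hypothesized factor against a bounded factor; approximating $x$ by its truncations $x\chi_F$ and invoking the relevant condition majorizes the target seminorm on each truncation, and a standard Abel/truncation argument passes to the full series. For (ii) one additionally uses $(4.6)$ to control the diagonal contribution, for (iii) one uses $(4.11)$ to identify the limit of each row, and in every case the hypothesized absolute constants $L$, $N$ absorb the sequence-dependent constant $N_x$.

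The hard part will be the careful bookkeeping of the nested quantifiers $\exists N$, $\forall L$ and the translation between the neighborhood base of $c_0(p)$, described by the weights $N^{-1/p_k}$ rather than by linear scaling, and the standard arguments from Banach space theory. A second subtlety is the asymmetry in the sign optimization used for necessity: since the test sequences $\varepsilon_k N^{-1/p_k}\chi_F$ are chosen once per $F$ and not per row $n$, the sign optimization produces exactly the form $\sup_F\sum_n|\sum_{k\in F}a_{nk}N^{-1/p_k}|$ appearing in $(4.16)$ rather than a stronger uniform pointwise bound, and it is precisely this weaker condition that must then be shown to suffice in the sufficiency direction.
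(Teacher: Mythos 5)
The first thing to note is that the paper does not prove this lemma at all: Lemma \ref{lem2} is quoted verbatim from Grosse-Erdmann \cite{GRO} (building on Lascarides--Maddox) and is used in the paper only as a tool for computing duals and characterizing matrix maps. So there is no proof in the paper to compare against; your proposal has to stand on its own as a reconstruction of the cited result, and in outline it does follow the classical route (test on a paranorm ball of $c_0(p)$ for necessity, split head/tail for sufficiency). As written, though, it has two concrete gaps.

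First, the necessity argument via the closed graph theorem cannot be run uniformly for all four targets. It requires the codomain to be a complete metrizable topological vector space, and $l_\infty(q)$ with the gauge $\sup_n|y_n|^{q_n/M}$ is a paranormed (hence metrizable) TVS only when $\inf_n q_n>0$ --- the paper itself records this restriction --- so part $(iv)$, and more generally the conditions carrying the $\forall L$ quantifier such as $(4.17)$ and $(4.19)$ when one tries to see them as ``continuity on a neighborhood'', need either this extra hypothesis or the standard gliding-hump/contradiction construction that Grosse-Erdmann actually uses; also, continuity of the row functionals $x\mapsto (Ax)_n$ on $c_0(p)$ is itself a $\beta$-dual/K-space fact that should be justified, not just ``observed''. (A minor point in the same direction: $(4.16)$ needs no sign optimization at all, since the modulus sits outside the inner sum over $k\in F$; where signs are needed, e.g.\ for $(4.17)$, they may be chosen per row $n$, so the ``asymmetry'' you flag is not the real issue.) Second, the sufficiency step is misstated. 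For $x\in c_0(p)$ one does not get a global bound of the form $|x_k|\le N_x^{1/p_k}$ that the hypothesized constants can ``absorb''; what one gets is $|x_k|\le N^{-1/p_k}$ only on a tail $k\ge k_0$, the finitely many remaining coordinates being controlled by the column conditions $(4.6)$, resp.\ $(4.11)$ after passing to the matrix $(a_{nk}-\alpha_k)$. Moreover, to make the tail estimate useful one must first renormalize: if $\sup_n L^{1/q_n}\sum_k|a_{nk}|N^{-1/p_k}=C$, replace $N$ by $NC^{H}$ (using $p_k\le H$) so that the constant becomes $\le 1$ and the tail gives $|(Ax)_n|^{q_n}\le 1/L$; without this step the bound $C(L)^{q_n}/L$ does not tend to $0$ as $L\to\infty$ and the $c_0(q)$/$c(q)$ conclusions do not follow. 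With those repairs your plan is essentially the Lascarides--Maddox/Grosse-Erdmann proof; without them it is not yet a proof.
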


\begin{lem}\cite{GRO}{\label{lem3}}
$(i)$ $A \in (c(p), l_1)$ if and only if $(4.16)$ and $(4.21)$ hold.\\
$(ii)$ $A\in (c(p), c_0(q))$ if and only if $(4.6)$ and $(4.17)$ and $(4.22)$ hold.\\
$(iii)$ $A\in (c(p), c(q))$ if and only if $(4.11)$, $(4.18)$, $(4.19)$ and $(4.23)$ hold.\\
$(iv)$ $A\in (c(p), l_{\infty}(q))$ if and only if $(4.20)$ and $(4.24)$ hold.\\
\end{lem}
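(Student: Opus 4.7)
The final statement is Lemma 3, which characterizes $A \in (c(p), Y)$ for $Y \in \{l_1, c_0(q), c(q), l_\infty(q)\}$. Although the paper attributes it to Grosse-Erdmann, I would prove it by reducing everything to Lemma 2 (the analogous statement for $c_0(p)$) via the natural decomposition
\[
c(p) = c_0(p) \oplus \langle e \rangle,
\]
where $e=(1,1,\ldots)$. The key observation is that $e \in c(p)$ (take limit $l=1$), and every $x\in c(p)$ with $x_k\to l$ admits the unique splitting $x = z + le$ with $z = x - le \in c_0(p)$. Since $A$ is assumed to act row-wise as a scalar series, one has $A_n(x) = A_n(z) + l\,A_n(e)$ whenever both pieces make sense.

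The plan is to establish the meta-equivalence
\[
A \in (c(p), Y) \;\Longleftrightarrow\; A \in (c_0(p), Y) \text{ and } Ae \in Y,
\]
from which all four parts follow by quoting Lemma 2 and translating the condition ``$Ae\in Y$'' into a condition on the row sums $\sigma_n := \sum_k a_{nk}$. The forward direction is immediate: if $A \in (c(p),Y)$, then in particular it acts on the subspace $c_0(p)\subset c(p)$ and on the element $e\in c(p)$. For the converse, given $x\in c(p)$ with limit $l$, write $x = z+le$, so that $Ax = Az + l\cdot Ae$; both summands lie in $Y$ by hypothesis, hence so does $Ax$ (all the relevant $Y$'s here are linear spaces, which is trivial). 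One must also check that $A_n(x)$ genuinely converges, i.e.\ that $A_n\in c(p)^{\beta}$; this follows because $A_n\in c_0(p)^{\beta}$ (from $A\in(c_0(p),Y)$, which forces the rows to lie in $c_0(p)^{\beta}$) and $A_n(e)=\sigma_n$ converges by $Ae\in Y$.

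With the meta-equivalence in hand, I would read off each case mechanically. For (i), $Ae\in l_1$ is exactly $\sum_n |\sigma_n|<\infty$, i.e.\ condition (4.21), combined with Lemma 2(i) giving (4.16). For (ii), $Ae\in c_0(q)$ becomes $\lim_n|\sigma_n|^{q_n}=0$, namely (4.22), on top of (4.6) and (4.17) from Lemma 2(ii). For (iii), $Ae\in c(q)$ (the existence of a scalar $\alpha$ with $|\sigma_n-\alpha|^{q_n}\to 0$) is (4.23), adjoined to the $c_0(p)\to c(q)$ conditions (4.11), (4.18), (4.19). For (iv), $Ae\in l_\infty(q)$ gives $\sup_n|\sigma_n|^{q_n}<\infty$, i.e.\ (4.24), together with (4.20) from Lemma 2(iv).

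The main technical obstacle, and the only place one must be careful, is the verification that each $A_n$ in fact belongs to $c(p)^{\beta}$ when the stated conditions hold, so that the expression $A_n(x) = \sum_k a_{nk}x_k$ is well-defined for every $x\in c(p)$ (not merely $x\in c_0(p)$). This reduces to showing that if $A_n\in c_0(p)^{\beta}$ and the scalar series $\sum_k a_{nk}$ converges, then $A_n\in c(p)^{\beta}$, which follows from $x = (x-le) + le$ and linearity of the series. Once this bookkeeping is done, the rest is a direct transcription of Lemma 2 plus the row-sum condition.
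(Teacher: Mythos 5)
Your argument is correct, but note that the paper does not prove this lemma at all: it is quoted verbatim from Grosse-Erdmann's paper on matrix transformations between the Maddox spaces (reference \cite{GRO}), exactly as Lemmas 4.1, 4.2 and 4.4 are. So your proposal supplies a derivation the paper omits, and it is a \emph{relative} one: it reduces the $c(p)$-case to the cited $c_0(p)$-case (Lemma 4.2) via the splitting $c(p)=c_0(p)+\langle e\rangle$, which is legitimate since $x\in c(p)$ with $|x_k-l|^{p_k}\to 0$ gives $x-le\in c_0(p)$ and $e\in c(p)$. Your meta-equivalence $A\in(c(p),Y)\Leftrightarrow A\in(c_0(p),Y)$ and $Ae\in Y$ then matches the stated conditions exactly, because $(4.21)$--$(4.24)$ are precisely the statements $Ae\in l_1$, $c_0(q)$, $c(q)$, $l_\infty(q)$ respectively, each implicitly requiring convergence of the row sums $\sum_k a_{nk}$. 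Two small points deserve emphasis and you handled them essentially correctly: first, the row-sum condition is genuinely additional, since when $\inf_k p_k=0$ the $c_0(p)$-conditions (e.g.\ $(4.18)$) do not force $\sum_k|a_{nk}|<\infty$, so $A_n\in c_0(p)^\beta$ alone does not give $A_n\in c(p)^\beta$; second, your bookkeeping step ($A_n\in c_0(p)^\beta$ together with convergence of $\sum_k a_{nk}$ implies $A_n\in c(p)^\beta$) is exactly what closes this gap. This is in fact how such $c(p)$-characterizations are standardly obtained from the $c_0(p)$-ones, so your route is a sound reconstruction of the cited result rather than an alternative to a proof in the paper.
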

\begin{lem}\cite{GRO}{\label{lem4}}
 $(i)$ $A \in (l_{\infty}(p), l_1)$ if and only if $(4.25)$ holds.\\
$(ii)$ $A\in (l_{\infty}(p), c_0(q))$ if and only if $(4.26)$ holds.\\
$(iii)$ $A\in (l_{\infty}(p), c(q))$ if and only if $(4.27)$ and $(4.28)$ hold.\\
$(iv)$ $A\in (l_{\infty}(p), l_{\infty}(q))$ if and only if $(4.29)$ holds.\\
\end{lem}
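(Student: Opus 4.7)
The plan is to reduce each of the four claims to the classical characterizations of matrix transformations from $l_\infty$ into the target space, exploiting the scale structure of $l_\infty(p)$. Note that $x \in l_\infty(p)$ if and only if there exists an integer $N \geq 1$ with $|x_k| \leq N^{1/p_k}$ for every $k$; writing $x_k = N^{1/p_k} y_k$ turns the ``ball'' $B_N \subset l_\infty(p)$ into the unit ball of $l_\infty$. Consequently $A=(a_{nk})$ acts on $B_N$ exactly as the scaled matrix $A^{(N)}:=(a_{nk}\,N^{1/p_k})$ acts on the unit ball of $l_\infty$. Thus $A \in (l_\infty(p), Y)$ will be rephrased as a statement about the family $\{A^{(N)}\}_N$ acting from $l_\infty$ to $Y$, and the classical K\"othe--Toeplitz / Grosse-Erdmann characterizations will then be plugged in.

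For parts (ii)--(iv) the correct quantifier is ``for every $N$'': every $x \in l_\infty(p)$ lies in some $B_N$, so membership in the target $Y$ is required for all $N$. For $Y = l_\infty(q)$ (case (iv)) the classical bound $\sup_n (\sum_k |a_{nk}| N^{1/p_k})^{q_n} < \infty$ for every $N$ produces (4.29); for $Y = c_0(q)$ (case (ii)) the analogous vanishing in $n$ yields (4.26); for $Y = c(q)$ (case (iii)) testing against $e_k \in l_\infty(p)$ forces column limits $\alpha_k=\lim_n a_{nk}$, and running the $c_0(q)$ argument on the shifted matrix $(a_{nk}-\alpha_k)$ produces (4.28), while (4.27) records the residual uniform boundedness along rows. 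Sufficiency is immediate from $|(Ax)_n| \leq \sum_k |a_{nk}| N^{1/p_k}$ whenever $|x_k| \leq N^{1/p_k}$.

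Part (i) has a different quantifier ``$\exists N$'' because $l_1$ carries no paranormed growth parameter to absorb large $N$. The idea is: if $A$ sends $l_\infty(p)$ into $l_1$, then the closed graph theorem applied to $A \colon l_\infty(p) \to l_1$ yields continuity in the respective paranorms; hence $A$ maps some paranorm-ball of $l_\infty(p)$, i.e.\ some $B_N$, into a bounded subset of $l_1$. That $N$ is the witness in (4.25), and the classical characterization of $(l_\infty, l_1)$, namely $\sup_F \sum_n |\sum_{k\in F} b_{nk}| < \infty$ applied with $b_{nk}=a_{nk}N^{1/p_k}$, gives precisely the stated condition.

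The main obstacle will be the necessity direction for (ii)--(iv): one must pass from the pointwise statement ``$Ax \in Y$ for every $x \in l_\infty(p)$'' to a condition uniform in $n$ for \emph{each} $N$. This is achieved by applying the closed graph theorem to $A$ restricted to the Banach space $B_N$ (a copy of $l_\infty$ under the diagonal scaling) and then invoking the classical theorem for $(l_\infty, Y)$ $N$ by $N$. All remaining steps are routine: verifying the correspondence between the scaled conditions and the list (4.25)--(4.29), and checking that the classical criteria are transcribed without loss after the substitution $b_{nk}=a_{nk}N^{1/p_k}$.
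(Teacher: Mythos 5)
This lemma is quoted in the paper directly from Grosse-Erdmann \cite{GRO} with no proof, so your argument can only be checked against the stated conditions, and there part (i) of your proposal contains a genuine error. Condition (4.25) is a universal statement: $\sup_F\sum_n\big|\sum_{k\in F}a_{nk}N^{1/p_k}\big|<\infty$ for \emph{every} $N$, exactly parallel to (4.26)--(4.29); your claim that (i) carries the quantifier ``$\exists N$'' is a misreading, and the existential statement you set out to prove is not equivalent to (4.25) in the generality of the lemma (bounded $p_k>0$, no assumption $\inf_k p_k>0$). Concretely, take $p_n=1/n$ and the diagonal matrix $a_{nn}=3^{-1/p_n}$, $a_{nk}=0$ for $k\neq n$: the condition holds with $N=2$, yet $x=(4^{1/p_n})\in l_\infty(p)$ gives $(Ax)_n=(4/3)^{1/p_n}\notin l_1$, so the $\exists N$ condition is not sufficient for $A\in(l_\infty(p),l_1)$. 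The closed-graph justification collapses at the same point: $l_\infty(p)$ is complete under its paranorm only when $\inf_k p_k>0$, and even then ``$A$ maps one paranorm ball into a bounded subset of $l_1$'' is not a scalar-homogeneous statement (the sets $B_N$ are related by the diagonal scaling $\mathrm{diag}((N/N_0)^{1/p_k})$, not by scalar multiples), so it does not by itself yield the characterization.

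The correct necessity argument for (i) is precisely the diagonal-scaling reduction you already use for (ii)--(iv): for each fixed $N$, the map $\mathrm{diag}(N^{1/p_k})$ sends $l_\infty$ into $l_\infty(p)$ (since $\sup_k C^{p_k}N<\infty$), so $A\in(l_\infty(p),l_1)$ forces $A^{(N)}=(a_{nk}N^{1/p_k})\in(l_\infty,l_1)$ for every $N$, and the classical criterion applied to each $A^{(N)}$ is exactly (4.25) with the universal quantifier; no closed graph theorem is needed here or in (ii)--(iv), where the known characterizations of $(l_\infty,c_0(q))$, $(l_\infty,c(q))$ and $(l_\infty,l_\infty(q))$ apply directly to $A^{(N)}$ once $A^{(N)}$ is known to map $l_\infty$ into the target. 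With that correction your outline for (ii)--(iv) is the standard route; one small further point is that in (iii) sufficiency requires verifying that $(Ax)_n$ actually converges (to $\sum_k\alpha_k x_k$, using (4.27) and (4.28)), which does not follow merely from the bound $|(Ax)_n|\le\sum_k|a_{nk}|N^{1/p_k}$.
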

We consider the following sets to obtain $\alpha$-duals of the spaces $X(r,s,t, p ; \Delta)$.
\begin{align*}
&H_1(p)=\bigcup_{L\in \mathbb{N}}\Big\{a=(a_n)\in w:  \displaystyle\sup_{F}\displaystyle\sum_{n=0}^{\infty}\Big|\displaystyle\sum_{k\in F} \displaystyle\sum_{j=0}^{n-k} (-1)^j \frac{D_j^{(s)}}{t_{j+k}}r_ka_n L^{\frac{-1}{p_k}}\Big|<\infty\Big\}\\
&H_2(p)=\Big\{a=(a_n)\in w:  \displaystyle\sum_{n=0}^{\infty}\Big|\displaystyle\sum_{k=0}^{\infty} \displaystyle\sum_{j=0}^{n-k} (-1)^j \frac{D_j^{(s)}}{t_{j+k}}r_ka_n \Big|<\infty\Big\}\\
&H_3(p)=\bigcap_{L\in \mathbb{N}}\Big\{a=(a_n)\in w:  \displaystyle\sup_{F}\displaystyle\sum_{n=0}^{\infty}\Big|
\displaystyle \sum_{k\in F}\displaystyle\sum_{j=0}^{n-k} (-1)^j \frac{D_j^{(s)}}{t_{j+k}}r_ka_n L^{\frac{1}{p_k}}\Big|<\infty\Big\}\\
& H_4(p)=\Big\{a=(a_n)\in w: \displaystyle\sup_{F}\displaystyle\sup_{k\in \mathbb{N}_0}\Big|\displaystyle\sum_{n\in F}
\displaystyle\sum_{j=0}^{n-k} (-1)^j \frac{D_j^{(s)}}{t_{j+k}}r_ka_n\Big|^{p_k}<\infty \Big\}\\
& H_5(p)=\bigcup_{L\in \mathbb{N}}\Big\{a=(a_n)\in w:
\displaystyle\sup_{F}\displaystyle\sum_{k=0}^{\infty}\Big|\displaystyle\sum_{n \in
F}\displaystyle\sum_{j=0}^{n-k} (-1)^j
\frac{D_j^{(s)}}{t_{j+k}}r_ka_nL^{-1}\Big|^{p_k'}<\infty\Big\}.
\end{align*}

\begin{thm}
$(a)$ If $ p_k > 1$, then $[l(r, s, t, p ; \Delta)]^\alpha=H_5(p)$
 and ~$[l(r, s, t, p ; \Delta)]^\alpha=  H_4(p)$ for $0 < p_k \leq
1$.\\
$(b)$ For $0 < p_k \leq H < \infty$, then \\
$(i)$ $[c_0(r, s, t, p ; \Delta)]^\alpha= H_1(p)$,\\
$(ii)$ $ [c(r, s, t, p ; \Delta)]^\alpha= H_1(p)\cap H_2(p)$,\\
$(iii)$ $[l_\infty(r, s, t, p ; \Delta)]^\alpha= H_3(p)$.
\end{thm}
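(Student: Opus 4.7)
The plan is to reduce each of the six claims to a known matrix-mapping characterization from the Grosse-Erdmann lemmas (Lemmas 1--4). The key bridge is the isomorphism $T$ of Theorem 2 together with its explicit inverse formula (4.3), which pulls the problem back from $X(r,s,t,p;\Delta)$ to the classical Maddox space $X$.

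First I would observe that $a=(a_n)\in [X(r,s,t,p;\Delta)]^\alpha$ iff $(a_nx_n)\in l_1$ for every $x\in X(r,s,t,p;\Delta)$. Writing $x=T^{-1}y$ with $y\in X$ via (4.3) gives
$$a_nx_n \;=\; \sum_{j=0}^{n} c_{nj}\,y_j, \qquad c_{nj} \;:=\; a_n\sum_{k=0}^{n-j}(-1)^k\,\frac{D_k^{(s)}}{t_{k+j}}\,r_j,$$
with $c_{nj}=0$ for $j>n$. Hence $(a_nx_n)\in l_1$ for every admissible $x$ is equivalent to saying that the triangle $C=(c_{nj})$ maps $X$ into $l_1$, i.e.\ $C\in(X,l_1)$. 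This single reduction underlies all six claims; after it, the remaining work is to match the conditions of the appropriate Grosse-Erdmann lemma against the definitions of the sets $H_i(p)$.

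Next I would apply the relevant lemma case by case, substituting $a_{nk}\leadsto c_{nk}$. For part $(a)$ with $X=l(p)$, Lemma 1(i) demands the pair (4.4)--(4.5). When $p_k>1$ for every $k$ one has $K_1=\emptyset$, so (4.4) is vacuous and only (4.5) survives; writing out $c_{nk}$ this is exactly the membership condition for $H_5(p)$ (the quantifier ``$\exists L$'' corresponding to the union over $L$ in the definition of $H_5(p)$). When $0<p_k\leq 1$ for every $k$ one has $K_2=\emptyset$, (4.5) is vacuous, and (4.4) transcribes into $H_4(p)$. For part $(b)$, Lemma 2(i) gives only (4.16), which transcribes into $H_1(p)$ and settles $c_0(r,s,t,p;\Delta)$; Lemma 3(i) adds (4.21) to (4.16), and (4.21) transcribes into the absolutely convergent double sum defining $H_2(p)$, giving $H_1(p)\cap H_2(p)$ for $c(r,s,t,p;\Delta)$; Lemma 4(i) gives (4.25), which transcribes into $H_3(p)$ for $l_\infty(r,s,t,p;\Delta)$, with the quantifier ``$\forall N$'' now corresponding to the intersection over $L$.

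The main obstacle is really bookkeeping rather than analysis: one must keep the inner and outer indices of the double sum defining $c_{nj}$ aligned so that each Grosse-Erdmann condition transcribes \emph{verbatim} into the corresponding $H_i(p)$, and one must read off the quantifier structure ($\exists L$ as a union, $\forall L$ as an intersection over $L\in\mathbb{N}$) correctly in each case. No genuinely delicate analytic estimate is required once the matrix $C$ is identified and the appropriate lemma is invoked.
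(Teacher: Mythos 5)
Your proposal is correct and follows essentially the same route as the paper: using the inverse formula (4.3) to write $a_nx_n=(Cy)_n$ for a triangle $C$, reducing $\alpha$-duality to $C\in(X,l_1)$, and then transcribing the conditions of Lemmas \ref{lem1}--\ref{lem4} into the sets $H_i(p)$. Your explicit remarks on the vacuity of (4.4) or (4.5) in the two cases of part $(a)$ and on reading $\exists L$/$\forall N$ as union/intersection over $L\in\mathbb{N}$ are exactly the bookkeeping the paper carries out implicitly.
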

\begin{proof}$(a)$ Let $p_k>1$ $\forall k$, $a=(a_{n}) \in w$, $x\in l(r, s, t, p; \Delta)$ and $y\in l(p)$. Then for each $n$, we have
$$ a_{n}x_n = \sum_{k=0}^{n}\sum_{j=0}^{n-k} (-1)^{j} \frac{D_{j}^{(s)}}{t_{j+k}}r_{k}a_{n}y_{k} =(Cy)_{n},$$
where the matrix $C=(c_{nk})_{n, k}$ is defined as
$$
c_{nk} = \left\{
\begin{array}{ll}
 \displaystyle\sum_{j=0}^{n-k} (-1)^j \frac{D_j^{(s)}}{t_{j+k}}r_ka_n& \mbox{if}~~   0\leq k \leq n \\
0 & \mbox{if} ~~ k>n
\end{array}
\right.
$$
and $x_n$ is given by $(4.3)$. Thus for each $x \in l(r, s, t, p;
\Delta)$, $(a_nx_{n})_{n} \in l_{1}$ if and only if $(Cy)_{n} \in
l_{1}$ where $y \in l(p)$. Therefore $a=(a_{n}) \in [l(r, s, t, p;
\Delta)]^{\alpha}$ if and only if $C \in (l(p), l_1)$. By using
Lemma \ref{lem1} $(i)$, we have
$$\displaystyle\sup_{F}\displaystyle\sum_{k=0}^{\infty}\Big|\displaystyle\sum_{n \in
F}\displaystyle\sum_{j=0}^{n-k} (-1)^j
\frac{D_j^{(s)}}{t_{j+k}}r_ka_nL^{-1}\Big|^{p_k'}<\infty.$$
Hence $[l(r, s, t, p; \Delta)]^{\alpha}= H_5(p).$\\
If $0<p_k\leq1$ $\forall k$, then by using
Lemma \ref{lem1} $(i)$, we have
$$\displaystyle\sup_{F}\displaystyle\sup_{k\in \mathbb{N}_0}\Big|\displaystyle\sum_{n\in F}
\displaystyle\sum_{j=0}^{n-k} (-1)^j \frac{D_j^{(s)}}{t_{j+k}}r_ka_n\Big|^{p_k}<\infty.$$
Thus $[l(r, s, t, p; \Delta)]^{\alpha}= H_4(p)$.\\
$(b)$ In a similar way, using Lemma \ref{lem2}$(i)$, Lemma \ref{lem3}$(i)$ and Lemma \ref{lem4}$(i)$, we obtain $[c_0(r, s, t, p ; \Delta)]^\alpha= H_1(p)$,
$[c(r, s, t, p ; \Delta)]^\alpha= H_1(p)\cap H_2(p)$ and $[l_\infty(r, s, t, p ; \Delta)]^\alpha= H_3(p)$ respectively.
\end{proof}

To compute the $\gamma$-duals of the sequence spaces $X(r,s,t, p ; \Delta)$, we first consider the following sets:
\begin{align*}
&\Gamma_1(p)= \bigcup_{L\in \mathbb{N}}\Big\{ a=(a_k)\in w: \displaystyle\sup_{n\in \mathbb{N}_0}\displaystyle\sum_{k=0}^{\infty} |e_{nk}| L^{\frac{-1}{p_k}}<\infty\Big\}\\
&\Gamma_2(p)= \Big\{ a=(a_k)\in w: \displaystyle\sup_{n\in \mathbb{N}_0}\Big|\displaystyle\sum_{k=0}^{\infty} e_{nk}\Big|<\infty\Big\}\\
&\Gamma_3(p)= \bigcap_{L\in \mathbb{N}}\Big\{ a=(a_k)\in w: \displaystyle\sup_{n\in \mathbb{N}_0}\displaystyle\sum_{k=0}^{\infty}|e_{nk}| L^{\frac{1}{p_k}}<\infty\Big\}\\
&\Gamma_4(p)= \bigcup_{L\in \mathbb{N}}\Big\{ a=(a_k)\in w: \displaystyle\sup_{n\in \mathbb{N}_0}\displaystyle\sup_{k \in \mathbb{N}_0 }| e_{nk}L^{-1}|^{p_k}<\infty\Big\}\\
&\Gamma_5(p)= \bigcup_{L\in \mathbb{N}}\Big\{ a=(a_k)\in w: \displaystyle\sup_{n\in \mathbb{N}_0}\displaystyle\sum_{k=0}^{\infty}|e_{nk}L^{-1}|^{p_k'}<\infty\Big\},
\end{align*}
where the matrix $E=(e_{nk})$ is defined as
\begin{equation}{\label{eq2}}
\displaystyle E=(e_{nk})= \left\{
\begin{array}{ll}
   \displaystyle r_k \bigg[\frac{a_{k}}{s_0t_k} + \Big(\frac{D_{0}^{(s)}}{t_k}- \frac{D_{1}^{(s)}}{t_{k+1}}
   \Big)\sum_{j=k+1}^{n}a_{j}+ \sum_{l=k+2}^{n}(-1)^{l-k} \frac{D_{l-k}^{(s)}}{t_{l}}\Big(\sum_{j=l}^{n}a_{j}\Big)\bigg] & \quad 0\leq k \leq n,\\
    0 & \quad k > n.
\end{array}\right.
\end{equation}
Note: We mean $\displaystyle \sum_{n}^{k} = 0$ if $n > k$.
\begin{thm}
$(a)$ If $ p_k > 1$, then $[l(r, s, t, p ; \Delta)]^\gamma=  \Gamma_5(p)$
~and~ $[l(r, s, t, p ; \Delta)]^\gamma=  \Gamma_4(p)$ if $0 < p_k \leq 1$.\\
$(b)$ If $0 < p_k \leq H < \infty$, then \\
$(i)$ $[c_0(r, s, t, p ; \Delta)]^\gamma= \Gamma_1(p)$,\\
$(ii)$ $ [c(r, s, t, p ; \Delta)]^\gamma= \Gamma_1(p)\cap \Gamma_2(p)$,\\
$(iii)$ $[l_\infty(r, s, t, p ; \Delta)]^\gamma= \Gamma_3(p)$.
\end{thm}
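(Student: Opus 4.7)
The plan is to follow the same template as the $\alpha$-dual theorem, but with the target sequence space changed from $l_1$ to $l_\infty$ (since $\gamma$-duals are defined via $bs$, which means bounded partial sums). For $a=(a_n)\in w$ and $x\in X(r,s,t,p;\Delta)$, set $y=A(r,s,t;\Delta)x\in X$. Then $a\in [X(r,s,t,p;\Delta)]^\gamma$ if and only if the partial sum sequence $\bigl(\sum_{k=0}^{n} a_k x_k\bigr)_n$ is bounded for every such $x$. The heart of the argument is to rewrite this partial sum as $(Ey)_n$ for the matrix $E$ defined in (4.30), so that the problem reduces to characterising when $E\in (X,l_\infty)$, which in turn reduces to the Grosse-Erdmann lemmas with constant exponent $q_n=1$.

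The first step is to substitute the inverse formula $x_k=\sum_{j=0}^{k}\sum_{l=0}^{k-j}(-1)^{l}\frac{D_l^{(s)}}{t_{l+j}}r_j y_j$ from (4.3) into $\sum_{k=0}^{n}a_k x_k$ and interchange the order of summation, obtaining an expression of the form $\sum_{j=0}^{n} r_j y_j\sum_{m=j}^{n}(-1)^{m-j}\frac{D_{m-j}^{(s)}}{t_m}\sum_{k=m}^{n}a_k$. Separating the terms $m=j$, $m=j+1$, and $m\geq j+2$ and using $D_0^{(s)}=1/s_0$ to split off the $a_j/(s_0 t_j)$ contribution, I recover exactly the entries $e_{nj}$ specified in (4.30). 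This is the routine but delicate index-bookkeeping step, and I expect it to be the main technical obstacle, since the precise form of $e_{nk}$ must be matched on the nose for the subsequent lemma applications to go through.

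Once the reformulation $\sum_{k=0}^{n}a_k x_k=(Ey)_n$ is in hand, parts (a) and (b) follow by direct citation. For part (a), apply Lemma \ref{lem1}(iv) with $q_n=1$ to the matrix $E$: when $p_k>1$ for all $k$ we have $K_1=\emptyset$, so only condition (4.15) survives and collapses to the defining condition of $\Gamma_5(p)$; when $0<p_k\leq 1$ for all $k$ we have $K_2=\emptyset$, so only (4.14) survives and collapses to $\Gamma_4(p)$. For part (b)(i), apply Lemma \ref{lem2}(iv) with $q_n=1$: condition (4.20) becomes $\exists N~ \sup_n \sum_k |e_{nk}| N^{-1/p_k}<\infty$, which is precisely membership in $\Gamma_1(p)$. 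Part (b)(ii) uses Lemma \ref{lem3}(iv), where the added condition (4.24) becomes $\sup_n\bigl|\sum_k e_{nk}\bigr|<\infty$, giving the extra factor $\Gamma_2(p)$. Part (b)(iii) uses Lemma \ref{lem4}(iv), where condition (4.29) with $q_n=1$ becomes $\forall N~ \sup_n\sum_k|e_{nk}|N^{1/p_k}<\infty$, i.e.\ membership in $\Gamma_3(p)$.

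Altogether, once the identity $\sum_{k=0}^{n}a_kx_k=(Ey)_n$ is established, the theorem is essentially a direct translation via the bijective isomorphism $T$ of Theorem 2, reducing $\gamma$-duality for $X(r,s,t,p;\Delta)$ to the matrix class $(X,l_\infty)$ already characterised by Grosse-Erdmann. I will present only the $l(p)$ case in detail and indicate that the other three cases follow by the same argument with the appropriate lemma substituted.
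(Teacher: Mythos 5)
Your proposal is correct and follows essentially the same route as the paper: substitute the inverse formula (4.3) into the partial sums, rearrange to get $\sum_{k=0}^{n}a_kx_k=(Ey)_n$ with $E$ exactly as in (4.30), and then invoke Lemma \ref{lem1}(iv), Lemma \ref{lem2}(iv), Lemma \ref{lem3}(iv) and Lemma \ref{lem4}(iv) with $q_n=1$, splitting the cases $p_k>1$ (where $K_1=\emptyset$) and $0<p_k\leq 1$ (where $K_2=\emptyset$) to land on $\Gamma_5(p)$, $\Gamma_4(p)$, $\Gamma_1(p)$, $\Gamma_1(p)\cap\Gamma_2(p)$ and $\Gamma_3(p)$. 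Your index bookkeeping (the intermediate form $\sum_{j}r_jy_j\sum_{m=j}^{n}(-1)^{m-j}\frac{D_{m-j}^{(s)}}{t_m}\sum_{k=m}^{n}a_k$ and the splitting of the terms $m=j$, $m=j+1$, $m\geq j+2$) reproduces the paper's computation leading to (4.31), so no gap remains.
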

\begin{proof}
$(a)$ Let $p_k > 1$ $\forall k$, $a
=(a_k)\in w$, $x \in l(r,s,t, p ; \Delta)$ and $y \in l(p)$. Then by
using (4.3), we have

\begin{align*}
\displaystyle\sum_{k=0}^{n}a_k x_k & =\sum_{k=0}^{n}\sum_{j=0}^{k}\sum_{l=0}^{k-j}(-1)^{l} \frac{D_{l}^{(s)}r_{j}y_{j}a_{k}}{t_{l+j}} \\
& = \sum_{k=0}^{n-1}\sum_{j=0}^{k}\sum_{l=0}^{k-j}(-1)^{l} \frac{D_{l}^{(s)}r_{j}y_{j}a_{k}}{t_{l+j}}+ \sum_{j=0}^{n}\sum_{l=0}^{n-j}(-1)^{l} \frac{D_{l}^{(s)}r_{j}y_{j}a_{n}}{t_{l+j}}\\
%& = \sum_{n=0}^{m-1}\bigg[ \sum_{k=0}^{n} (-1)^{k} \frac{D_{k}^{(s)}r_{0}y_{0}}{t_{k}} + \sum_{k=0}^{n-1} (-1)^{k} \frac{D_{k}^{(s)}r_{1}y_{1}}{t_{k+1}}+ \cdots +  \frac{D_{0}^{(s)}r_{n}y_{n}}{t_{n}} \bigg]a_n \\ & + \bigg[ \sum_{k=0}^{m} (-1)^{k} \frac{D_{k}^{(s)}r_{0}y_{0}}{t_{k}}  + \sum_{k=0}^{m-1} (-1)^{k} \frac{D_{k}^{(s)}r_{1}y_{1}}{t_{k+1}} + \cdots + \frac{D_{0}^{(s)}r_{m}y_{m}}{t_{m}} \bigg]a_m\\
%& =\displaystyle\bigg[ \sum_{n=0}^{m-1}\Big(\sum_{k=0}^{n} (-1)^{k} \frac{D_{k}^{(s)}}{t_{k}}\Big)a_n + \sum_{k=0}^{m} (-1)^{k} \frac{D_{k}^{(s)}}{t_{k}}a_m \bigg]r_0y_0 \\ & ~~~ +
%\displaystyle\bigg[ \sum_{n=0}^{m-1}\Big(\sum_{k=0}^{n-1} (-1)^{k} \frac{D_{k}^{(s)}}{t_{k+1}}\Big)a_n + \sum_{k=0}^{m-1} (-1)^{k} \frac{D_{k}^{(s)}}{t_{k+1}}a_m \bigg]r_1y_1 + \cdots + \frac{D_{0}^{(s)}}{t_{m}}r_m y_m a_m\\
& =  \bigg[\frac{D_{0}^{(s)}}{t_{0}}a_0 + \Big( \frac{D_{0}^{(s)}}{t_{0}}- \frac{D_{1}^{(s)}}{t_{1}} \Big) \sum_{j=1}^{n}a_j + \sum_{l=2}^{n}(-1)^{l} \frac{D_{l}^{(s)}}{t_{l}} \Big( \sum_{j=l}^{n}a_j\Big) \bigg]r_0 y_0 \nonumber\\ &  ~~ +
\bigg[\frac{D_{0}^{(s)}}{t_{1}}a_1 + \Big( \frac{D_{0}^{(s)}}{t_{1}}- \frac{D_{1}^{(s)}}{t_{2}} \Big) \sum_{j=2}^{n}a_j + \sum_{l=3}^{n}(-1)^{l-1} \frac{D_{l-1}^{(s)}}{t_{l}} \Big( \sum_{j=l}^{n}a_j\Big)\bigg]r_1 y_1 + \cdots + \frac{r_n a_n}{t_n} D_{0}^{(s)}y_{n}\nonumber\\
\end{align*}
\begin{align}
& = \sum_{k=0}^{n}r_k \bigg[\frac{a_{k}}{s_0t_k} +
\Big(\frac{D_{0}^{(s)}}{t_k}- \frac{D_{1}^{(s)}}{t_{k+1}} \Big)
\sum_{j=k+1}^{n}a_{j}+ \sum_{l=k+2}^{n}(-1)^{l-k}
\frac{D_{l-k}^{(s)}}{t_{l}}\Big(\sum_{j=l}^{n}a_{j}\Big) \bigg]y_{k}\nonumber\\
%+\frac{r_m a_m}{t_m} D_{0}^{(s)}y_{m}\\
&= (Ey)_{n},
\end{align}
where the matrix $E$ is defined in (\ref{eq2}). \\Thus $a \in \big[l(r,s,t, p ; \Delta)\big]^{\gamma}$ if and only if $ax=(a_kx_k)\in bs$, where
$x\in l(r,s,t, p ; \Delta)$ if and only if
$\Big(\displaystyle\sum_{k=0}^{n}a_k x_k \Big)_{n}\in
l_{\infty}$, i.e., $(Ey)_{n} \in l_{\infty}$, where $y\in l(p)$. Hence by using Lemma \ref{lem1}$(iv)$ with $q_n=1$ $\forall n$, we have
$$\displaystyle\sup_{n\in \mathbb{N}_0}\displaystyle\sum_{k=0}^{\infty}|e_{nk}L^{-1}|^{p_k'}<\infty, \mbox{~for some~} L\in \mathbb{N}. $$
 Hence $\big[l(r,s,t, p ; \Delta)\big]^{\gamma} = \Gamma_{5}(p).$\\
If $0<p_k\leq1$ $\forall k$, then using
Lemma \ref{lem1} $(iv)$, we have
$$\displaystyle\sup_{n\in \mathbb{N}_0}\displaystyle\sup_{k \in \mathbb{N}_0 }| e_{nk}L^{-1}|^{p_k}<\infty \mbox{~for some~} L\in \mathbb{N}.$$
Thus $[l(r, s, t, p; \Delta)]^{\gamma}= \Gamma_4(p)$.\\
$(b)$ In a similar way, using Lemma \ref{lem2}$(iv)$, Lemma \ref{lem3}$(iv)$ and Lemma \ref{lem4}$(iv)$, we obtain $[c_0(r, s, t, p ; \Delta)]^\gamma= \Gamma_1(p)$,
$[c(r, s, t, p ; \Delta)]^\gamma= \Gamma_1(p)\cap \Gamma_2(p)$ and $[l_\infty(r, s, t, p ; \Delta)]^\gamma=\Gamma_3(p)$ respectively.
\end{proof}

To find the $\beta$-duals of
$X(r,s,t,p; \Delta)$, we define the following sets:

\begin{align*}
&B_{1}= \Big\{ a=(a_n)\in w: \displaystyle\sum_{j =k+1}^{\infty}a_j ~~{\rm exists~ for~ all}~ k \Big \},\\
&B_{2}= \Big\{ a=(a_n)\in w: \displaystyle\sum_{j =k+2}^{\infty}(-1)^{j-k} \frac{D_{j-k}^{(s)}}{t_{j}}\sum_{l =j}^{\infty}a_l ~~{\rm exists~ for~ all}~ k \Big \},\\
&B_{3}= \Big\{ a=(a_n)\in w: \Big(\frac{r_k a_k}{t_k}\Big) \in l_{\infty}(p) \Big \},\\
&B_{4}= \bigcup_{L\in \mathbb{N}}\Big\{ a=(a_n)\in w: \displaystyle\sup_{n\in \mathbb{N}_0}\sum_{k=0}^{\infty}\Big|e_{nk}L^{-1}\Big|^{p_{k}{'}}<\infty\Big \}, \\
&B_{5}= \Big\{ a=(a_n)\in w: \displaystyle\sup_{n, k\in \mathbb{N}_0
}|e_{nk}|^{p_k}< \infty \Big \},\\
&B_{6}= \Big\{ a=(a_n)\in w: \exists (\alpha_k)~\displaystyle\lim_{n \rightarrow \infty}e_{nk}= \alpha_k ~ \forall ~k \Big \},\\
&B_{7}= \bigcap_{L\in \mathbb{N}}\Big\{ a=(a_n)\in w: \exists (\alpha_k)~ \displaystyle\sup_{n, k\in \mathbb{N}_0
}\Big(|e_{nk} - \alpha_k | L \Big)^{p_k}< \infty \Big \}, \\
&B_{8}= \bigcap_{L\in \mathbb{N}}\Big\{ a=(a_n)\in w: \exists (\alpha_k)~\displaystyle\sup_{n\in \mathbb{N}_0
}\sum_{k=0}^{\infty}\Big(|e_{nk} - \alpha_k | L \Big)^{p_k'}< \infty \Big \},
\end{align*}
\begin{align*}
&B_{9}= \bigcup_{L\in \mathbb{N}}\Big\{ a=(a_n)\in w: \exists (\alpha_k)~ \displaystyle\sup_{n\in \mathbb{N}_0
}\sum_{k=0}^{\infty}\Big|e_{nk} - \alpha_k \Big| L ^\frac{-1}{p_k}< \infty
\Big \},\\
&B_{10}= \bigcup_{L\in \mathbb{N}}\Big\{ a=(a_n)\in w:\displaystyle\sup_{n\in \mathbb{N}_0
}\sum_{k=0}^{\infty}\Big|e_{nk}\Big| L^\frac{-1}{p_k}< \infty \Big \},\\
%&B_{11}= \bigcup_{L\in \mathbb{N}}\Big\{ a=(a_n)\in w: \exists (\alpha_k)~\displaystyle\sup_{n\in \mathbb{N}_0
%}\sum_{k=0}^{\infty}\Big|e_{nk}-\alpha_k\Big| L^\frac{-1}{p_k}< \infty \Big \},\\
&B_{11}= \Big\{ a=(a_n)\in w: \exists \alpha~\displaystyle\lim_{n
}\Big|\sum_{k=0}^{\infty}e_{nk}-\alpha\Big| =0 \Big \},\\
&B_{12}= \bigcap_{L\in \mathbb{N}}\Big\{ a=(a_n)\in w: \displaystyle\sup_{n\in \mathbb{N}_0
}\sum_{k=0}^{\infty}\Big|e_{nk}\Big|L^\frac{1}{p_k} <\infty\Big \},\\
&B_{13}= \bigcap_{L\in \mathbb{N}}\Big\{ a=(a_n)\in w: \exists (\alpha_k)~\displaystyle\lim_{n
}\sum_{k=0}^{\infty}|e_{nk}-\alpha_k|L^\frac{1}{p_k}=0\Big \}.
\end{align*}
\begin{thm}
$(a)$ If $p_k>1$ for all $k$, then $[l(r,s, t, p; \Delta)]^{\beta} = B_1 \bigcap B_2 \bigcap B_3
\bigcap B_4 \bigcap B_6\bigcap B_8$ and if $0< p_k \leq 1 $ for all $k$, then $[l(r,s, t, p; \Delta)]^{\beta} = B_1 \bigcap B_2
\bigcap B_3 \bigcap B_5\bigcap B_6 \bigcap B_7$.
\\
$(b)$ Let $p_k>0$ for all $k$. Then\\
$(i)$ $[c_0(r,s, t, p; \Delta)]^{\beta} = B_1 \bigcap B_2 \bigcap B_3
\bigcap B_6 \bigcap B_9\bigcap B_{10}$.\\
$(ii)$ $[c(r,s, t, p; \Delta)]^{\beta}= B_1 \bigcap B_2 \bigcap B_3\bigcap B_6\bigcap B_{9}\bigcap B_{10} \bigcap B_{11}$.\\
$(iii)$ $[l_{\infty}(r,s, t, p; \Delta)]^{\beta}= B_1 \bigcap B_2 \bigcap B_3\bigcap B_{12}\bigcap B_{13}$.

\end{thm}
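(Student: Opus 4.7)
The proof follows the same scheme used for the $\alpha$- and $\gamma$-duals: I would express the partial sums $\sum_{k=0}^n a_k x_k$ through the transformed sequence $y = A(r,s,t;\Delta)x$, and then translate the $\beta$-dual condition into a matrix transformation statement of the form $E \in (X,c)$, to which the Grosse-Erdmann characterisations apply.

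The starting point is the identity $\sum_{k=0}^n a_k x_k = (Ey)_n$, where $E=(e_{nk})$ is the lower triangular matrix displayed in \eqref{eq2}. This identity is exactly the rearrangement already carried out in the proof of the $\gamma$-dual theorem, using the expression of $x$ in terms of $y$ from $(4.3)$. Since $x \mapsto y$ is a linear bijection $X(r,s,t,p;\Delta) \to X$ by the isomorphism theorem above, the condition $a \in [X(r,s,t,p;\Delta)]^{\beta}$ is equivalent to the requirement that the scalar sequence $((Ey)_n)$ converge for every $y \in X$, that is, $E \in (X,c)$ where $c$ is identified with $c(q)$ for the constant choice $q_n = 1$.

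For $E$ to act into $c$, the column limits $\alpha_k = \lim_n e_{nk}$ must exist for each $k$. Reading off \eqref{eq2}, this forces three regularity conditions on $a$: convergence of $\sum_{j=k+1}^\infty a_j$ (condition $B_1$); convergence of the iterated series $\sum_{l=k+2}^\infty (-1)^{l-k} \frac{D_{l-k}^{(s)}}{t_l} \sum_{j=l}^\infty a_j$ (condition $B_2$); and boundedness of the surviving diagonal contribution $r_k a_k/(s_0 t_k)$ in $l_\infty(p)$, which is condition $B_3$.

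With these three regularity conditions secured, it remains to harvest the summability conditions by applying part (iii) of Lemmas \ref{lem1}--\ref{lem4} to the matrix $E$ with $q_n = 1$. For $X = l(p)$ the argument splits on whether $K_1 = \emptyset$ (the case $p_k > 1$, which collapses the relevant Grosse-Erdmann conditions to $B_4, B_6, B_8$) or $K_2 = \emptyset$ (the case $0 < p_k \le 1$, yielding $B_5, B_6, B_7$ instead). For $X = c_0(p), c(p), l_\infty(p)$ the corresponding lemmas produce $B_9, B_{10}$ together with $B_6$; then additionally the row-sum condition $B_{11}$ coming from $(4.23)$; and finally $B_{12}, B_{13}$, respectively. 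The principal obstacle is bookkeeping: substituting $q_n = 1$ into the nested ``$\forall L\,\exists N$'' quantifiers and the scalings $L^{1/q_n}, N^{-1/p_k}$ that appear in the Grosse-Erdmann conditions, and matching the resulting statements against the precise $\bigcap_L$/$\bigcup_L$ definitions of the sets $B_4, \ldots, B_{13}$. Necessity of the regularity conditions $B_1$ and $B_2$ (which is not delivered by the Grosse-Erdmann lemmas themselves, as these concern matrix action rather than entrywise existence) would be established directly by testing $a$ against the canonical basis sequences $e_k \in X$ and their preimages in $X(r,s,t,p;\Delta)$ under the isomorphism $T^{-1}$.
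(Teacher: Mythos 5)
Your proposal follows essentially the same route as the paper: the identity $\sum_{k=0}^{n}a_kx_k=(Ey)_n$ with the triangle $E$ of (4.30) reduces $\beta$-dual membership to $E\in(X,c)$, and Lemmas 4.1--4.4(iii) with $q_n=1$ are then read off, splitting on $K_1=\emptyset$ versus $K_2=\emptyset$ in the $l(p)$ case. You in fact go slightly beyond the paper, which simply lists $B_1$, $B_2$, $B_3$ in the conclusion without deriving them, whereas you indicate how they arise from the existence of the column limits of $E$ and from testing against the preimages of the unit sequences.
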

\begin{proof}
$(a)$ Let $p_k>1$ for all $k$. We have from $(4.31)$
$$\sum\limits_{k=0}^{n}a_k x_k = (Ey)_n,$$ where the matrix $E$ is
defined in (\ref{eq2}).
Thus $a \in \big[l(r,s,t,p; \Delta)\big]^{\beta}$
if and only if $ax=(a_kx_k)\in cs$ where $x \in l(r,s,t,p; \Delta)$
if and only if $(Ey)_{n} \in c$ where $y\in l(p)$, i.e., $E \in (l(p),
c)$. Hence by Lemma \ref{lem1}$(iii)$ with $q_n=1$ $\forall n$, we have
\begin{align*}
& \exists L\in \mathbb{N} ~\displaystyle\sup_{n\in \mathbb{N}_0}\sum_{k=0}^{\infty}\Big|e_{nk}L^{-1}\Big|^{p_{k}{'}}<\infty,\\
& \exists (\alpha_k)~\displaystyle\lim_{n \rightarrow \infty}e_{nk}= \alpha_k  ~\mbox{for all} ~k, \\
& \exists (\alpha_k)~\displaystyle\sup_{n\in \mathbb{N}_0}\sum_{k=0}^{\infty}\Big(|e_{nk} - \alpha_k| L \Big)^{p_k'}< \infty.
\end{align*}
Therefore $[l(r,s, t, p; \Delta)]^{\beta} = B_1
\bigcap B_2 \bigcap B_3 \bigcap B_4 \bigcap B_6\bigcap B_8.$\\
If $0<p_k\leq1$ $\forall k$, then using Lemma \ref{lem1}$(iii)$ with $q_n=1$, $\forall n$, we have
\begin{align*}
& \displaystyle\sup_{n, k\in \mathbb{N}_0}|e_{nk}|^{p_k}< \infty,\\
& \exists (\alpha_k)~\displaystyle\lim_{n \rightarrow \infty}e_{nk}= \alpha_k  ~\mbox{for all} ~k, \\
& \forall L\in \mathbb{N} ~\exists (\alpha_k)~ \displaystyle\sup_{n, k\in \mathbb{N}_0
}\Big(|e_{nk} - \alpha_k | L \Big)^{p_k}< \infty.
\end{align*}
Thus $[l(r,s, t, p; \Delta)]^{\beta} = B_1
\bigcap B_2 \bigcap B_3 \bigcap B_5 \bigcap B_6\bigcap B_7$.\\
$(b)$ In a similar way, using Lemma \ref{lem2}$(iii)$, Lemma \ref{lem3}$(iii)$ and Lemma \ref{lem4}$(iii)$, we can obtain the $\beta$-duals of $c_0(r,s, t, p; \Delta)$, $c(r,s, t, p; \Delta)$ and $l_{\infty}(r,s, t, p; \Delta)$ respectively.

\end{proof}

\subsection{Matrix mappings}
\begin{thm}Let $\tilde{E}=(\tilde{e}_{nk})$ be the matrix which is same as the matrix
${E}=({e}_{nk})$ defined in (\ref{eq2}), where $a_{j}$ is replaced by $a_{nj}$ and $a_k$ by $a_{nk}$. \\
$(a)$ Let
$p_k>1$ for all $k$, then $A \in (l(r,s,
t, p; \Delta), l_{\infty})$ if and only if there exists $L\in \mathbb{N}$ such
that
    $$ \displaystyle\sup_{n}\sum_{k}\Big|\tilde{e}_{nk}L^{-1}\Big|^{p_{k}^{'}} < \infty \mbox{~~and~}(a_{nk})_{k}
     \in B_1 \bigcap B_2 \bigcap B_3 \bigcap B_4 \bigcap B_6\bigcap B_8.$$
$(b)$ Let $0< p_k \leq 1 $ for all $k$. Then $A \in
(l(r,s, t, p; \Delta), l_{\infty})$
 if and only if  $$ \displaystyle\sup_{n, k\in \mathbb{N}_0}\Big|\tilde{e}_{nk}\Big|^{p_{k}} < \infty \mbox{~~and~}(a_{nk})_{k}
   \in B_1 \bigcap B_2 \bigcap B_3 \bigcap
B_5\bigcap B_6 \bigcap B_7.$$
\end{thm}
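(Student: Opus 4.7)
The plan is to split the characterization into two tasks: ensuring that each series $\sum_k a_{nk} x_k$ is well-defined on $l(r,s,t,p;\Delta)$, and then bounding it uniformly in $n$. The first task is handled by requiring every row of $A$ to lie in $[l(r,s,t,p;\Delta)]^\beta$, which by Theorem~4.4 yields exactly the $B_j$-intersections stated in the theorem (depending on whether $p_k>1$ or $p_k\le 1$). The second task will be reduced, via the isomorphism $T:l(r,s,t,p;\Delta)\to l(p)$ of Theorem~4.2, to characterizing when $\tilde E\in(l(p),l_\infty)$, and then Lemma~4.1$(iv)$ (with $q_n=1$) will finish the job.

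More concretely, I would proceed as follows. First, set $y=Tx$ so that $y\in l(p)$ iff $x\in l(r,s,t,p;\Delta)$, and $x$ is recovered from $y$ through formula $(4.3)$. For each fixed row index $n$, apply the partial-sum identity from the derivation $(4.31)$ of the $\beta$-dual, but with the scalar sequence $a=(a_k)$ replaced by the $n$-th row $(a_{nk})_k$: for every $m\in\mathbb{N}_0$,
$$ \sum_{k=0}^{m}a_{nk}x_k \;=\; \sum_{k=0}^{m} e^{(n)}_{mk}\,y_k, $$
where $e^{(n)}_{mk}$ has the same shape as the entry $e_{mk}$ in $(4.30)$ but formed from $(a_{nj})_j$. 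Once the row $(a_{nk})_k$ belongs to $[l(r,s,t,p;\Delta)]^\beta$, the inner tail sums $\sum_{j=k+1}^{\infty}a_{nj}$ and $\sum_{j=l}^{\infty}a_{nj}$ converge (this is the content of $B_1$ and $B_2$), so letting $m\to\infty$ yields
$$ (Ax)_n \;=\; \sum_{k=0}^{\infty}\tilde e_{nk}\,y_k \;=\; (\tilde E y)_n, $$
with $\tilde E$ as defined in the statement. The legitimacy of this passage to the limit is really the content of the $B_j$-conditions (they are exactly $E^{(n)}\in(l(p),c)$ uniformly in the sense of the $\beta$-dual); this step is the small technical obstacle and it is what justifies reducing the problem to a matrix map on $l(p)$.

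With the representation $Ax=\tilde E y$ in hand, $A\in(l(r,s,t,p;\Delta),l_\infty)$ is equivalent to $\tilde E\in(l(p),l_\infty)=(l(p),l_\infty(q))$ with $q_n\equiv 1$. Invoking Lemma~4.1$(iv)$, this is equivalent to conditions $(4.14)$ and $(4.15)$ simultaneously. In case $(a)$, where $p_k>1$ for all $k$, one has $K_1=\emptyset$ so $(4.14)$ is vacuous, and $(4.15)$ with $q_n=1$ reads $\forall L\,\exists N:\sup_n\sum_k |\tilde e_{nk}L N^{-1}|^{p_k'}<\infty$; absorbing $L N^{-1}$ into a single scalar $L^{-1}$ this collapses to the stated condition $\exists L:\sup_n\sum_k |\tilde e_{nk}L^{-1}|^{p_k'}<\infty$. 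In case $(b)$, where $0<p_k\le 1$, one has $K_2=\emptyset$ and $(4.15)$ is vacuous, while $(4.14)$ with $q_n=1$ reads $\forall L:\sup_{n,k}|\tilde e_{nk}L|^{p_k}<\infty$; since $p_k\le 1$ gives $L^{p_k}\le L$, this is equivalent to $\sup_{n,k}|\tilde e_{nk}|^{p_k}<\infty$, as claimed.

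Finally, I combine the two ingredients: the $\beta$-dual membership of each row (which is exactly Theorem~4.4$(a)$, giving $B_1\cap B_2\cap B_3\cap B_4\cap B_6\cap B_8$ for $p_k>1$ and $B_1\cap B_2\cap B_3\cap B_5\cap B_6\cap B_7$ for $p_k\le 1$) together with the single supplementary condition on $\tilde E$ derived from Lemma~4.1$(iv)$. The conjunction of these two is necessary (take $x=e_k$ and use the uniform boundedness principle respectively) and sufficient (by the explicit identity $Ax=\tilde E y$). The only real subtlety is the limit interchange establishing $(Ax)_n=(\tilde E y)_n$; everything else is extraction of sub-cases from Grosse-Erdmann's lemma.
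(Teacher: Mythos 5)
Your proposal is correct and follows essentially the same route as the paper: establish $(Ax)_n=(\tilde E y)_n$ via the isomorphism and the $\beta$-dual conditions (which justify the passage to the limit in the partial sums), then characterize $\tilde E\in(l(p),l_\infty)$ by Lemma 4.1$(iv)$ with $q_n=1$. The only cosmetic difference is that the paper proves sufficiency directly from the elementary inequality $|ab|\leq T\big(|aT^{-1}|^{p'}+|b|^{p}\big)$ rather than re-invoking the lemma, while your treatment of the quantifier collapse in $(4.14)$–$(4.15)$ is a harmless elaboration of what the paper leaves implicit.
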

\begin{proof}
$(a)$ Let $p_k>1$ for all $k$. Since
$(a_{nk})_{k}\in \big[l(r,s,t,p;
\Delta)\big]^{\beta}$ for each fixed $n$, $Ax$ exists for all $x\in l(r,s,t,p;
\Delta) $. Now for each $n$, we have
\begin{align*}
\sum\limits_{k=0}^{m}a_{nk} x_k &=
\sum\limits_{k=0}^{m}\displaystyle r_k
\Big[\frac{a_{nk}}{s_0t_k} + \Big(\frac{D_{0}^{(s)}}{t_k}-
\frac{D_{1}^{(s)}}{t_{k+1}}
   \Big)\sum_{j=k+1}^{n}a_{nj}+ \sum_{j=k+2}^{n}(-1)^{j-k} \frac{D_{j-k}^{(s)}}{t_{j}}\Big(\sum_{l=j}^{n}a_{nl}\Big)\Big
   ]y_k\\
   & = \sum\limits_{k=0}^{m}\tilde{e}_{nk}y_k,
\end{align*}
 Taking
$m\rightarrow\infty$, we have
$$\sum\limits_{k=0}^{\infty}a_{nk}
x_k=\sum\limits_{k=0}^{\infty}\tilde{e}_{nk} y_k  \mbox{~~~for all~}
n.$$ We know that for any $T>0$ and any
complex numbers $a, b$
\begin{equation} {\label{eq1}}
|ab|\leq T (|aT^{-1}|^{p'} + |b|^p),
\end{equation} where $p>1$ and $\frac{1}{p}+ \frac{1}{p'}=1$. Using (\ref{eq1}), we get
$$\displaystyle \sup_{n}\bigg|\sum\limits_{k=0}^{\infty}a_{nk} x_k\bigg|\leq
\sup_{n}\sum\limits_{k=0}^{\infty}\Big|\tilde{e}_{nk} \Big|\Big|y_k\Big|
\leq  T\bigg[\sup_{n}\sum\limits_{k=0}^{\infty}|\tilde{e}_{nk}T^{-1}|^{{p_k}'} +
\sum\limits_{k=0}^{\infty}|y_{k}|^{{p_k}}\bigg]<\infty.$$
Thus $Ax\in l_\infty$. This proves that $A\in (l(r,s,
t, p; \Delta), l_{\infty})$.\\
Conversely, assume that $A\in (l(r,s, t, p; \Delta), l_{\infty})$
and $p_k>1$ for all $k$. Then $Ax$
exists for each $x\in l(r,s, t, p; \Delta)$, which implies that
$(a_{nk})_{k}\in [l(r,s, t, p;
\Delta)]^\beta$ for each $n$. Thus \\
$(a_{nk})_{k}\in B_1 \bigcap B_2 \bigcap B_3
\bigcap B_4 \bigcap B_6\bigcap B_8$. Also from
$\sum\limits_{k=0}^{\infty}a_{nk}
x_k=\sum\limits_{k=0}^{\infty}\tilde{e}_{nk} y_k $, we have
$\tilde{E}=(\tilde{e}_{nk})\in (l(p), l_\infty)$, i.e., for some natural number $L$,
$\displaystyle\sup_{n\in \mathbb{N}_0}\sum_{k=0}^{\infty}\Big|\tilde{e}_{nk}L^{-1}\Big|^{p_{k}^{'}}
<\infty$. This completes the proof.\\
$(b)$ We omit the proof of this part as it is similar to the previous one.
\end{proof}

\begin{thm}
$(a)$ Let $p_k>1$ for all $k$, then $A \in
(l(r,s, t, p; \Delta), l_{1})$ if and only if
\begin{center}
$ \displaystyle\sup_{F} \sum_{k=0}^{\infty}\Big| \sum_{n \in
F}\tilde{e}_{nk} L^{-1}\Big|^{p^{'}_{k}} < \infty ~~ \mbox{for some}~ L \in
\mathbb{N} \mbox{~~and~} (a_{nk})_{k \in \mathbb{N_{\rm 0}}}  \in B_1 \bigcap B_2 \bigcap B_3 \bigcap B_4\bigcap B_6\bigcap B_8.$
 \end{center}
$(b)$ Let $0< p_k \leq 1 $ for all $k$. Then $ A \in
(l(r,s, t, p; \Delta), l_{1})$
 if and only if
\begin{center}
   $ \displaystyle\sup_{ F} \sup_{k}\Big| \sum_{ n \in F}\tilde{e}_{nk}\Big|^{p_{k}} <
  \infty$
and $ ( a_{nk})_{k} \in B_1 \bigcap B_2
\bigcap B_3 \bigcap B_5\bigcap B_6 \bigcap B_7.$
\end{center}
\end{thm}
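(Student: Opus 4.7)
The plan is to mirror the preceding theorem that characterized $A \in (l(r,s,t,p;\Delta), l_\infty)$. The key tool is the identity $\sum_{k=0}^{\infty} a_{nk} x_k = \sum_{k=0}^{\infty} \tilde{e}_{nk} y_k$ for each $n \in \mathbb{N}_0$, where $y = Tx$ and $T : l(r,s,t,p;\Delta) \to l(p)$ is the paranorm-preserving isomorphism of Theorem 2; this identity was already derived in the proof of the preceding theorem via the rearrangement that produced (4.31). It splits the characterization into two independent requirements: (i) each row $(a_{nk})_k$ must lie in the $\beta$-dual of $l(r,s,t,p;\Delta)$, so that $Ax$ is well-defined, and (ii) $\tilde{E} = (\tilde{e}_{nk})$ must belong to $(l(p), l_1)$, so that the image lands in $l_1$.

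For necessity, suppose $A \in (l(r,s,t,p;\Delta), l_1)$. Then $Ax$ converges for each admissible $x$, forcing (i), and the $\beta$-dual theorem gives precisely the stated intersections: $B_1 \cap B_2 \cap B_3 \cap B_4 \cap B_6 \cap B_8$ when $p_k > 1$, and $B_1 \cap B_2 \cap B_3 \cap B_5 \cap B_6 \cap B_7$ when $0 < p_k \le 1$. Since $T$ maps $l(r,s,t,p;\Delta)$ bijectively onto $l(p)$, applying the identity to $x = T^{-1}y$ shows $\tilde{E}y \in l_1$ for every $y \in l(p)$, i.e.\ (ii). Now apply Lemma 1(i): in case (a), $K_1 = \emptyset$, so only (4.5) is relevant, which reads $\sup_F \sum_k \bigl|\sum_{n\in F}\tilde{e}_{nk} L^{-1}\bigr|^{p_k'} < \infty$; in case (b), $K_2 = \emptyset$, so only (4.4) is relevant, which reads $\sup_F \sup_k \bigl|\sum_{n\in F}\tilde{e}_{nk}\bigr|^{p_k} < \infty$. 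The converse simply reverses these steps: the $\beta$-dual conditions validate the identity, and Lemma 1(i) applied to the assumed supremum condition delivers $\tilde{E} \in (l(p), l_1)$, whence $Ax = \tilde{E}y \in l_1$ for each $x$.

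The only genuinely nontrivial ingredient, the termwise manipulation that converts $\sum_k a_{nk} x_k$ into $\sum_k \tilde{e}_{nk} y_k$ and justifies the interchange of summations, has already been executed in the proof of Theorem 5 and can be invoked verbatim. Consequently the real content of the present proof is bookkeeping: identifying the $\beta$-dual intersections from the earlier theorem, and reading off from Lemma 1(i) that the dichotomy $p_k>1$ versus $0<p_k\le 1$ isolates exactly one of the conditions (4.5) or (4.4). The two parts of the theorem therefore differ only in which of these two translations applies, and no additional analytic obstacle arises beyond what has already been handled.
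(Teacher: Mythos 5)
Your proposal is correct and matches the paper's intent: the paper omits the proof of this theorem, stating only that it follows the same lines as the preceding characterization of $(l(r,s,t,p;\Delta), l_{\infty})$, which is precisely the argument you reproduce — the identity $\sum_{k}a_{nk}x_k=\sum_{k}\tilde{e}_{nk}y_k$, the $\beta$-dual conditions on the rows, and Lemma \ref{lem1}$(i)$ with the observation that $p_k>1$ (resp.\ $0<p_k\le 1$) makes $K_1$ (resp.\ $K_2$) empty so that only condition $(4.5)$ (resp.\ $(4.4)$) survives.
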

\begin{proof}
We omit the proof as it follows in a similar way of Theorem $4.6$.
\end{proof}
\begin{cor}
$(a)$ $A\in (c_0(r, s, t, p; \Delta), c_0(q))$ if and only if $(4.6)$, $(4.17)$ hold with $\tilde{e}_{nk}$ in place of $a_{nk}$ and $(a_{nk})\in \big[c_0(r,s,t,p;
\Delta)\big]^{\beta}$,\\
$(b)$ $A\in (c_0(r, s, t, p; \Delta), c(q))$ if and only if $(4.11)$, $(4.18)$, $(4.19)$ hold with $\tilde{e}_{nk}$ in place of $a_{nk}$ and $(a_{nk})\in \big[c_0(r,s,t,p;
\Delta)\big]^{\beta}$,\\
$(b)$ $A\in (c_0(r, s, t, p; \Delta), l_{\infty}(q))$ if and only if $(4.20)$ holds with $\tilde{e}_{nk}$ in place of $a_{nk}$ and $(a_{nk})\in \big[c_0(r,s,t,p;
\Delta)\big]^{\beta}$.
\end{cor}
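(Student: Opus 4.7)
The plan is to mirror the strategy already used in Theorems $4.6$ and $4.7$: translate a matrix map out of $c_{0}(r,s,t,p;\Delta)$ into a matrix map out of $c_{0}(p)$ via the isomorphism $T$ of Theorem $4.2$ (and the explicit inversion formula $(4.3)$), and then invoke the Grosse-Erdmann characterisations collected in Lemma \ref{lem2}. Since the three parts $(a)$, $(b)$, $(c)$ are entirely parallel, I will describe the argument for $(a)$ and indicate what changes for $(b)$ and $(c)$.

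First I would note that the condition $(a_{nk})_{k}\in\bigl[c_{0}(r,s,t,p;\Delta)\bigr]^{\beta}$ is exactly what is required for $Ax=\bigl(\sum_{k}a_{nk}x_{k}\bigr)_{n}$ to be defined for every $x\in c_{0}(r,s,t,p;\Delta)$; this is the analogue of the opening step in Theorem $4.6$. Next, exactly as in the calculation leading to $(4.31)$ in the $\gamma$-dual theorem, for each fixed $n$ and every $m\in\mathbb{N}_{0}$ one has
$$\sum_{k=0}^{m}a_{nk}x_{k}=\sum_{k=0}^{m}\tilde{e}_{nk}y_{k},$$
where $y=A(r,s,t;\Delta)x$ and $\tilde{E}=(\tilde{e}_{nk})$ is the matrix obtained from $E$ in $(\ref{eq2})$ by substituting $a_{nj}$ for $a_{j}$. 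Letting $m\to\infty$, and using that $(a_{nk})_{k}\in\bigl[c_{0}(r,s,t,p;\Delta)\bigr]^{\beta}$ makes both partial sums convergent, we obtain
$$\sum_{k=0}^{\infty}a_{nk}x_{k}=\sum_{k=0}^{\infty}\tilde{e}_{nk}y_{k}\qquad(n\in\mathbb{N}_{0}).$$
Hence $(Ax)_{n}=(\tilde{E}y)_{n}$ for all $n$.

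Now, by Theorem $4.2$, the map $T:x\mapsto y$ is a linear bijection between $c_{0}(r,s,t,p;\Delta)$ and $c_{0}(p)$. Consequently
$$A\in\bigl(c_{0}(r,s,t,p;\Delta),\,c_{0}(q)\bigr)\ \Longleftrightarrow\ \tilde{E}\in\bigl(c_{0}(p),\,c_{0}(q)\bigr),$$
provided $(a_{nk})_{k}\in\bigl[c_{0}(r,s,t,p;\Delta)\bigr]^{\beta}$ for every $n$. Applying Lemma \ref{lem2}$(ii)$ to $\tilde{E}$ gives conditions $(4.6)$ and $(4.17)$ with $\tilde{e}_{nk}$ in place of $a_{nk}$, which is exactly $(a)$. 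For $(b)$ replace $c_{0}(q)$ by $c(q)$ and invoke Lemma \ref{lem2}$(iii)$, yielding $(4.11)$, $(4.18)$, $(4.19)$; for $(c)$ replace $c_{0}(q)$ by $l_{\infty}(q)$ and invoke Lemma \ref{lem2}$(iv)$, yielding $(4.20)$.

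The main obstacle, and really the only delicate point, is the reduction step: one must be sure that the identity $\sum_{k}a_{nk}x_{k}=\sum_{k}\tilde{e}_{nk}y_{k}$ is valid as an equation of convergent series (not merely of partial sums), and that the equivalence $A\in(c_{0}(r,s,t,p;\Delta),Y)\Leftrightarrow\tilde{E}\in(c_{0}(p),Y)$ is justified in both directions. The forward direction is immediate from $T$ being surjective (any $y\in c_{0}(p)$ arises as $A(r,s,t;\Delta)x$ for some $x\in c_{0}(r,s,t,p;\Delta)$ via $(4.3)$), and the reverse direction from $T$ being a well-defined paranorm-preserving bijection. Once this bridge is in place, the three Grosse-Erdmann lemmas supply the stated conditions with no further work.
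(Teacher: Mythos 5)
Your proposal is correct and follows essentially the same route the paper intends: the corollary is stated without proof precisely because it follows, as you argue, by the reduction $\sum_k a_{nk}x_k=\sum_k\tilde{e}_{nk}y_k$ and the isomorphism of Theorem 4.2, after which Lemma \ref{lem2} parts $(ii)$, $(iii)$, $(iv)$ give the stated conditions. Your handling of the limit passage and of the role of the $\beta$-dual condition is at the same level of detail as the paper's own proofs of Theorems 4.6 and 4.7, so nothing further is needed.
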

\begin{cor}
$(a)$ $A\in (c(r, s, t, p; \Delta), c_0(q))$ if and only if $(4.6)$, $(4.17)$, $(4.22)$ hold with $\tilde{e}_{nk}$ in place of $a_{nk}$ and $(a_{nk})\in \big[c(r,s,t,p;
\Delta)\big]^{\beta}$,\\
$(b)$ $A\in (c(r, s, t, p; \Delta), c(q))$ if and only if $(4.11)$, $(4.18)$, $(4.19)$, $(4.23)$ hold with $\tilde{e}_{nk}$ in place of $a_{nk}$ and $(a_{nk})\in \big[c(r,s,t,p;
\Delta)\big]^{\beta}$,\\
$(b)$ $A\in (c(r, s, t, p; \Delta), l_{\infty}(q))$ if and only if $(4.20)$, $(4.24)$ hold with $\tilde{e}_{nk}$ in place of $a_{nk}$ and $(a_{nk})\in \big[c(r,s,t,p;
\Delta)\big]^{\beta}$.
\end{cor}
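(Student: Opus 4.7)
The plan is to mimic exactly the method used in the preceding Theorems 4.6 and 4.7, replacing the role of $l(p)$ (and Lemma 4.1) by $c(p)$ (and Lemma 4.3). The key structural fact, already extracted in equation (4.31) of the proof of Theorem 4.5, is that for $x\in c(r,s,t,p;\Delta)$ corresponding under the isomorphism of Theorem 4.2 to $y=A(r,s,t;\Delta)x\in c(p)$, the finite partial sums satisfy
\[
\sum_{k=0}^{m} a_{nk}x_k \;=\; \sum_{k=0}^{m} \tilde{e}_{nk}y_k,
\]
where $\tilde{e}_{nk}$ is obtained from the matrix $E$ of (\ref{eq2}) by replacing $a_k$ and $a_j$ by $a_{nk}$ and $a_{nj}$.

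First I would note that $Ax$ is well-defined on $c(r,s,t,p;\Delta)$ precisely when, for every fixed $n$, the row $(a_{nk})_k$ lies in $[c(r,s,t,p;\Delta)]^\beta$; this is one of the stated conditions in each of (a), (b), (c) and is the same reduction done in Theorem 4.6. Under this hypothesis the left-hand side of the displayed identity converges as $m\to\infty$, which in turn forces the right-hand side to converge, and we obtain
\[
(Ax)_n \;=\; \sum_{k=0}^{\infty} a_{nk}x_k \;=\; \sum_{k=0}^{\infty} \tilde{e}_{nk}y_k \;=\; (\tilde{E}y)_n
\]
for every $n$. Since the correspondence $x\leftrightarrow y$ given by Theorem 4.2 is a linear bijection between $c(r,s,t,p;\Delta)$ and $c(p)$, this immediately yields the equivalence
\[
A\in (c(r,s,t,p;\Delta),Y) \ \Longleftrightarrow\ \tilde{E}\in (c(p),Y)
\]
for any sequence space $Y$, provided the $\beta$-dual condition on the rows is in force.

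With this reduction at hand, the three parts follow by a direct application of Lemma 4.3. For part (a), take $Y=c_0(q)$ and quote Lemma 4.3(ii) applied to the matrix $\tilde{E}$; this gives conditions (4.6), (4.17), (4.22) with $\tilde{e}_{nk}$ in place of $a_{nk}$. For part (b), take $Y=c(q)$ and invoke Lemma 4.3(iii), yielding (4.11), (4.18), (4.19), (4.23). For part (c), take $Y=l_\infty(q)$ and invoke Lemma 4.3(iv), yielding (4.20) and (4.24). In each case the extra requirement $(a_{nk})_k\in [c(r,s,t,p;\Delta)]^\beta$ carries the information needed so that $Ax$ exists.

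The only non-routine step is justifying the passage $m\to\infty$ in the partial-sum identity, which is precisely where the $\beta$-dual hypothesis is used; once this link is established, the corollary is a mechanical transcription of Lemma 4.3. For this reason I would write the argument compactly by saying ``by the same computation as in Theorem 4.6 we have $\sum_k a_{nk}x_k=\sum_k \tilde{e}_{nk}y_k$, so $A\in(c(r,s,t,p;\Delta),Y)$ iff $\tilde{E}\in(c(p),Y)$'' and then appeal to the appropriate clause of Lemma 4.3, rather than re-deriving (4.31) from scratch.
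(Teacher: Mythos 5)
Your proposal is correct and is exactly the argument the paper intends: the corollary is stated without proof as an immediate consequence of the reduction used in Theorems 4.6--4.7, namely that (given each row $(a_{nk})_k$ in $[c(r,s,t,p;\Delta)]^{\beta}$) the identity $\sum_k a_{nk}x_k=\sum_k \tilde{e}_{nk}y_k$ turns $A\in(c(r,s,t,p;\Delta),Y)$ into $\tilde{E}\in(c(p),Y)$, after which Lemma \ref{lem3}$(ii)$, $(iii)$, $(iv)$ give precisely conditions $(4.6),(4.17),(4.22)$; $(4.11),(4.18),(4.19),(4.23)$; and $(4.20),(4.24)$ respectively. No discrepancy with the paper's approach.
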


\begin{cor}
$(a)$ $A\in (l_{\infty}(r, s, t, p; \Delta), c_0(q))$ if and only if $(4.26)$ holds with $\tilde{e}_{nk}$ in place of $a_{nk}$  and $(a_{nk})\in \big[l_{\infty}(r,s,t,p;
\Delta)\big]^{\beta}$,\\
$(b)$ $A\in (l_{\infty}(r, s, t, p; \Delta), c(q))$ if and only if $(4.27)$, $(4.28)$ hold with $\tilde{e}_{nk}$ in place of $a_{nk}$ and $(a_{nk})\in \big[l_{\infty}(r,s,t,p;
\Delta)\big]^{\beta}$,\\
$(b)$ $A\in (l_{\infty}(r, s, t, p; \Delta), l_{\infty}(q))$ if and only if $(4.29)$ holds with $\tilde{e}_{nk}$ in place of $a_{nk}$ and $(a_{nk})\in \big[l_{\infty}(r,s,t,p;
\Delta)\big]^{\beta}$.
\end{cor}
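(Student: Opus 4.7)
The strategy is to reduce each of the three matrix-mapping problems on $l_{\infty}(r,s,t,p;\Delta)$ to a corresponding matrix-mapping problem on the Maddox space $l_{\infty}(p)$, and then read off the conditions directly from Lemma~\ref{lem4}. The bridge is the linear isomorphism $T\colon l_{\infty}(r,s,t,p;\Delta)\to l_{\infty}(p)$, $x\mapsto y=A(r,s,t;\Delta)x$, constructed in Theorem~4.2, whose inverse is given pointwise by formula $(4.3)$.

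First I would check that $Ax$ is well-defined on $l_{\infty}(r,s,t,p;\Delta)$. This is exactly the demand that $(a_{nk})_{k}\in [l_{\infty}(r,s,t,p;\Delta)]^{\beta}$ for every $n$, which by Theorem~4.5$(b)(iii)$ means $(a_{nk})_{k}\in B_{1}\cap B_{2}\cap B_{3}\cap B_{12}\cap B_{13}$. This membership must therefore be listed as a standing hypothesis in each of (a), (b), (c); it plays the same role that the $\beta$-dual condition plays in Theorems~4.6 and 4.7.

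Next I would repeat the rearrangement carried out in $(4.31)$, substituting $a_{nk}$ in place of $a_{k}$ in the formula $(4.30)$ for $E=(e_{nk})$ to obtain $\tilde{E}=(\tilde{e}_{nk})$. Under the $\beta$-dual assumption, the partial-sum manipulation is valid for each fixed $n$, and on taking $m\to\infty$ one gets
\[
\sum_{k=0}^{\infty}a_{nk}x_{k}\;=\;\sum_{k=0}^{\infty}\tilde{e}_{nk}y_{k}\qquad (n\in\mathbb{N}_{0}),
\]
for every $x\in l_{\infty}(r,s,t,p;\Delta)$ with corresponding $y=Tx\in l_{\infty}(p)$. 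Because $T$ is a bijection, this identity yields the equivalence
\[
A\in \bigl(l_{\infty}(r,s,t,p;\Delta),\,Y\bigr)\iff \tilde{E}\in \bigl(l_{\infty}(p),\,Y\bigr)
\]
for $Y\in\{c_{0}(q),c(q),l_{\infty}(q)\}$.

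The proof is then closed by applying Lemma~\ref{lem4}(ii), (iii), (iv) in turn to $\tilde{E}$, which gives the stated conditions $(4.26)$ for (a), $(4.27)$ and $(4.28)$ for (b), and $(4.29)$ for (c), all with $\tilde{e}_{nk}$ in place of $a_{nk}$. No genuinely hard step is involved; the only delicate point is justifying the interchange of summations when deriving the representation $\sum_{k}a_{nk}x_{k}=\sum_{k}\tilde{e}_{nk}y_{k}$, and this is precisely why the hypothesis $(a_{nk})_{k}\in [l_{\infty}(r,s,t,p;\Delta)]^{\beta}$ cannot be omitted. Thus the corollary reduces to a direct translation of Lemma~\ref{lem4} through the isomorphism $T$, in exact parallel with the proofs of Theorems~4.6 and 4.7 and Corollaries~1 and 2.
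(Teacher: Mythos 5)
Your proposal is correct and follows exactly the route the paper intends: the corollary is stated without its own proof because it is the same argument as Theorems 4.6--4.7, namely passing to $y=Tx\in l_\infty(p)$ via the identity $\sum_k a_{nk}x_k=\sum_k\tilde{e}_{nk}y_k$ (valid under the $\beta$-dual hypothesis, with $[l_\infty(r,s,t,p;\Delta)]^\beta$ taken from Theorem 4.5$(b)(iii)$) and then invoking Lemma \ref{lem4}$(ii)$--$(iv)$ for the matrix $\tilde{E}$. Your handling of both directions and of the role of the $\beta$-dual condition matches the paper's treatment in Theorem 4.6.
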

%%%%%%%%%%%%%%%%%%%%%%%%%%%%%%%%%%%%%%%%%%%%%%%%%%%%%%%%%%%%%%%%%%%%%%%%%%%%%
%%%%%%%%%%%%%Delete version
\section{Kadec-Klee property and rotundity of $l(r,s, t, p; \Delta)$}
In many geometric properties of Banach spaces, Kadec-Klee property and rotundity play an important role in metric fixed point theory. These properties are extensively studied in Orlicz spaces (see \cite{CHE}, \cite{HUDZ}, \cite{WU}) and also studied in difference sequence spaces by Kananthai\cite{KANAN}. In this section, we discuss these properties in the sequence space $l(r,s, t, p; \Delta)$. \\
Throughout the paper, for any Banach space $(Y, \|.\|)$, we denote $S(Y)$ and $B(Y)$ as the unit sphere and closed unit ball respectively.\\
A point $x\in S(Y)$ is called an extreme point if $x=\frac{y+z}{2}$ implies $y=z$ for every $y, z \in S(Y)$. A Banach space $Y$ is said to be rotund (strictly convex) if every point of $S(Y)$ is an extreme point.\\
Let $X$ be a real vector space. A functional $\sigma: X \rightarrow [0,\infty]$ is called a modular if\\
$(i)$ $\sigma(x)=0$ if and only if $x=0$,\\
$(ii)$ $\sigma(-x)= \sigma(x)$,\\
$(iii)$ $\sigma(\alpha x+ \beta y)\leq \sigma(x)+ \sigma(y)$ for all $x, y\in X$ and $\alpha, \beta\geq0$ with $\alpha+ \beta=1$.\\
A modular $\sigma$ is said to be convex if \\
$(iv)$ $\sigma(\alpha x+ \beta y)\leq \alpha\sigma(x)+ \beta\sigma(y)$ for all $x, y\in X$ and $\alpha, \beta\geq0$ with $\alpha+ \beta=1$.\\
For any modular $\sigma$, the modular space $X_\sigma$ is defined by
$$X_\sigma=\{x\in X: \sigma(\lambda x)\rightarrow 0 \mbox{~as~} \lambda\rightarrow 0+\}.$$
We define $X_\sigma^{*}=\{x\in X: \sigma(\lambda x)<\infty \mbox{~for some ~} \lambda>0\}$. It is clear that $X_\sigma\subseteq X_\sigma^{*}$. Orlicz \cite{ORC} prove that if  $\sigma$ is convex then $X_\sigma=X_\sigma^{*}$.\\
 A modular $\sigma$ is said to be\\
$(i)$ Right continuous if $\displaystyle\lim_{\lambda\rightarrow 1+}\sigma(\lambda x)=\sigma(x)$,\\
$(ii)$ Left continuous if $\displaystyle\lim_{\lambda\rightarrow 1-}\sigma(\lambda x)=\sigma(x)$,\\
$(iii)$ Continuous if it is both left and right continuous.\\
A modular $\sigma$ is said to satisfy $\Delta_2$-condition \cite{CUI5}, denoted by $\sigma\in \Delta_2$ if for any $\epsilon>0$, there exist constants $K\geq2$ and $a>0$ such that $\sigma(2x)\leq K \sigma(x)+ \epsilon$ for all $x\in X_\sigma$ with $\sigma(x)\leq a$.\\
If $\sigma$ satisfies $\Delta_2$-condition for any $a>0$ with $K\geq2$ dependent on $a$, we say that $\sigma$ satisfies strong $\Delta_2$-condition, denoted by $\sigma\in \Delta_2^s$ \cite{CUI5}.\\
Let $p_n>1$ for all $n\in \mathbb{N_{\rm 0}}$. Then for $x \in
l(r,s, t, p; \Delta)$, we define
$$ \sigma_{p}(x) = \sum_{n=0}^{\infty}\bigg |\frac{1}{r_{n}}\sum_{k=0}^{n} s_{n-k}t_{k}\Delta x_{k}\bigg|^{p_n} \qquad (n \in \mathbb{N_{\rm 0}}).$$
By the convexity of the function $t \longmapsto |t|^{p_n}$ for each
$n \in \mathbb{N_{\rm 0}}$, we have $\sigma_{p}$ is a convex modular on
$l(r,s, t, p; \Delta)$.\\
We consider $l(r,s, t, p; \Delta)$ equipped with the so called Luxemburg norm given by
$$ \|x \| = \inf \Big\{ c > 0: \sigma_p\Big(\frac{x}{c}\Big) \leq 1 \Big\}.$$
A normed sequence space $X$ is said to be $K$-space if each
coordinate mapping $P_k$ defined by $P_k(x)= x_k$ is continuous for each $k\in \mathbb{N_{\rm 0}}$. If
$X$ is a Banach space as well as $K$-space, it is called a BK space.
Let $p_k \geq 1$ $\forall k \in  \mathbb{N_{\rm 0}}$ and $M =
\displaystyle\sup_{k} p_k$. It is easy to verify that $\sigma_{p}$ satisfies the strong $\Delta_2$-condition, i.e., $\sigma_p\in \Delta_2^s$.
\begin{pro}\label{pro2}
For $x \in l(r,s, t, p; \Delta)$, the modular $\sigma_{p}$ on $l(r,s, t, p; \Delta)$ satisfies the following:\\
$(i)$ if $0< \alpha \leq 1$, then $\alpha^{M}\sigma_p(\frac{x}{\alpha}) \leq \sigma_p({x})$ and $\sigma_p(\alpha{x})\leq \sigma_p({x})$.\\
$(ii)$ if $\alpha \geq 1$, then $\sigma_p({x}) \leq \alpha^{M}\sigma_p(\frac{x}{\alpha})  $.\\
$(iii)$ if $\alpha \geq 1$, then $\sigma_p({x}) \leq \alpha\sigma_p({x}) \leq \sigma_p(\alpha{x})$.
\end{pro}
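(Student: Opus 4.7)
The plan is to unwind the definition of $\sigma_p$ and exploit the pointwise bounds $1 \leq p_n \leq M$ term by term. Write $y_n=\tfrac{1}{r_n}\sum_{k=0}^{n}s_{n-k}t_k\Delta x_k$, so that
\[
\sigma_p(x)=\sum_{n=0}^{\infty}|y_n|^{p_n},\qquad \sigma_p(\alpha x)=\sum_{n=0}^{\infty}|\alpha|^{p_n}|y_n|^{p_n},\qquad \alpha^{M}\sigma_p\!\Big(\tfrac{x}{\alpha}\Big)=\sum_{n=0}^{\infty}\alpha^{\,M-p_n}|y_n|^{p_n}.
\]
Each inequality then reduces to a comparison of the scalar prefactors $\alpha^{p_n}$ or $\alpha^{M-p_n}$ with $1$ (respectively with $\alpha$), summed termwise. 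No subtlety about interchange of limits is needed because all the series have non-negative terms; the bounds hold at the level of each single summand and therefore for the sums as well.

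For (i) with $0<\alpha\le 1$: since $p_n\ge 1>0$, we have $\alpha^{p_n}\le 1$, giving $\sigma_p(\alpha x)\le \sigma_p(x)$; and since $M-p_n\ge 0$, we also have $\alpha^{M-p_n}\le 1$, which yields $\alpha^{M}\sigma_p(x/\alpha)\le \sigma_p(x)$. For (ii) with $\alpha\ge 1$: now $\alpha^{M-p_n}\ge 1$ because $M-p_n\ge 0$, so termwise $|y_n|^{p_n}\le \alpha^{M-p_n}|y_n|^{p_n}$, which sums to $\sigma_p(x)\le \alpha^{M}\sigma_p(x/\alpha)$.

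For (iii) with $\alpha\ge 1$: the left inequality $\sigma_p(x)\le \alpha\sigma_p(x)$ is immediate from $\alpha\ge 1$ and the non-negativity of $\sigma_p(x)$. For the right inequality, the key step is to use $p_n\ge 1$: since $\alpha\ge 1$, the function $t\mapsto \alpha^{t}$ is non-decreasing, so $\alpha^{p_n}\ge \alpha^{1}=\alpha$ for every $n$. Multiplying by $|y_n|^{p_n}\ge 0$ and summing gives $\sigma_p(\alpha x)=\sum_n \alpha^{p_n}|y_n|^{p_n}\ge \alpha\sum_n |y_n|^{p_n}=\alpha\sigma_p(x)$.

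There is no genuine obstacle here; the only thing to be careful about is to remember that the hypothesis $p_n\ge 1$ is what powers part (iii), while $p_n\le M$ is what drives the $\alpha^{M-p_n}$ comparisons in parts (i) and (ii). Convergence of the relevant series is automatic from $x\in l(r,s,t,p;\Delta)$ together with the fact that each prefactor above is bounded by a constant depending only on $\alpha$ and $M$.
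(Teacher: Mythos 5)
Your proof is correct and follows essentially the same route as the paper: a termwise comparison of the prefactors $\alpha^{p_n}$ and $\alpha^{M-p_n}$ with $1$ (using $1\leq p_n\leq M$), the only cosmetic difference being that the paper derives $\sigma_p(\alpha x)\leq\sigma_p(x)$ from convexity of $\sigma_p$ while you use the direct bound $\alpha^{p_n}\leq 1$, and the paper omits the details of $(ii)$ and $(iii)$ as similar, which you have written out correctly.
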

\begin{proof}
$(i)$ We have $$\sigma_p\Big(\frac{x}{\alpha}\Big)=\sum_{n=0}^{\infty}\bigg |\frac{1}{\alpha r_{n}}\sum_{k=0}^{n} s_{n-k}t_{k}\Delta x_{k}\bigg|^{p_n}\leq \frac{1}{\alpha^M} \sum_{n=0}^{\infty}\bigg |\frac{1}{r_{n}}\sum_{k=0}^{n} s_{n-k}t_{k}\Delta x_{k}\bigg|^{p_n}=\frac{1}{\alpha^M}\sigma_p({x}),$$
i.e., $\alpha^{M}\sigma_p(\frac{x}{\alpha}) \leq \sigma_p({x})$ and using convexity of $\sigma_p$, we have $\sigma_p(\alpha{x})\leq \sigma_p({x})$ for $0< \alpha \leq 1$.\\
Statements $(ii)$ and $(iii)$ can be proved in a similar way. So, we omit the details.
\end{proof}
\begin{pro}\label{pro1}
The modular $\sigma_p$ is continuous.
\end{pro}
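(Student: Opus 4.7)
The plan is to establish right continuity and left continuity together by viewing $\sigma_p(\lambda x)$ as an integral with respect to counting measure on $\mathbb{N}_0$ and applying the dominated convergence theorem. Fix $x\in l(r,s,t,p;\Delta)$ and set
$$y_n = \frac{1}{r_n}\sum_{k=0}^{n} s_{n-k} t_k \Delta x_k,$$
so that by definition $\sigma_p(x) = \sum_{n=0}^{\infty} |y_n|^{p_n} < \infty$ and, since $A(r,s,t;\Delta)$ is linear, $\sigma_p(\lambda x) = \sum_{n=0}^{\infty} |\lambda|^{p_n} |y_n|^{p_n}$ for every scalar $\lambda$.

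Next I would exploit the standing hypothesis $1 < p_n \leq M$ for all $n$ to produce a $\lambda$-uniform summable majorant. For $\lambda$ restricted to a neighbourhood of $1$, say $\lambda\in[1/2,3/2]$, we have the uniform bound $|\lambda|^{p_n} \leq (3/2)^{M}$, independent of $n$. Therefore the general term satisfies
$$|\lambda|^{p_n}|y_n|^{p_n} \leq (3/2)^{M}|y_n|^{p_n},$$
and the right-hand side is summable because $x\in l(r,s,t,p;\Delta)$. Moreover, for each fixed $n$, the map $\lambda \mapsto |\lambda|^{p_n}$ is continuous at $\lambda=1$, so $|\lambda|^{p_n}|y_n|^{p_n} \to |y_n|^{p_n}$ as $\lambda \to 1$.

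The dominated convergence theorem (for series) then yields
$$\lim_{\lambda \to 1} \sigma_p(\lambda x) = \lim_{\lambda \to 1}\sum_{n=0}^{\infty} |\lambda|^{p_n}|y_n|^{p_n} = \sum_{n=0}^{\infty}|y_n|^{p_n} = \sigma_p(x),$$
which captures both $\lim_{\lambda\to 1+}\sigma_p(\lambda x) = \sigma_p(x)$ and $\lim_{\lambda\to 1-}\sigma_p(\lambda x)=\sigma_p(x)$, i.e., right and left continuity simultaneously. Hence $\sigma_p$ is continuous.

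There is no substantive obstacle here: the only mildly delicate point is producing a dominating function that does not depend on $\lambda$, and this is immediate from the boundedness of $(p_n)$. Alternatively, one could argue directly from Proposition \ref{pro2}: for $0<\alpha\le 1$, $\sigma_p(\alpha x)\le \sigma_p(x)\le \alpha^{-M}\sigma_p(\alpha x)$, from which left continuity follows by letting $\alpha \to 1-$, and an analogous sandwich with $\alpha\ge 1$ handles right continuity; but the dominated-convergence route is cleaner and requires no extra case analysis.
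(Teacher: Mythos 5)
Your argument is correct, but it takes a different route from the paper. The paper proves continuity by the squeeze/sandwich method using Proposition \ref{pro2}: for $\lambda>1$ it writes $\sigma_p(x)\leq \lambda\sigma_p(x)\leq \sigma_p(\lambda x)\leq \lambda^{M}\sigma_p(x)$ and lets $\lambda\to 1+$, and for $0<\lambda<1$ it uses $\lambda^{M}\sigma_p(x)\leq \sigma_p(\lambda x)\leq \lambda\sigma_p(x)$ and lets $\lambda\to 1-$; this is exactly the ``alternative'' you sketch in your last sentence, and it has the virtue of being completely elementary and of reusing the inequalities already established (note it silently uses $\sigma_p(x)<\infty$, which holds since $x\in l(r,s,t,p;\Delta)$, just as your argument does). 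Your main route instead expands $\sigma_p(\lambda x)=\sum_n |\lambda|^{p_n}|y_n|^{p_n}$ and applies dominated convergence for series with the $\lambda$-uniform majorant $(3/2)^{M}|y_n|^{p_n}$ on $\lambda\in[1/2,3/2]$, which is valid because $(p_n)$ is bounded above by $M$ and $\sum_n|y_n|^{p_n}<\infty$; in fact it gives the slightly stronger statement that $\lambda\mapsto\sigma_p(\lambda x)$ is continuous at every $\lambda$, not just at $\lambda=1$, and it does not need the lower bound $p_n>1$. The trade-off is that it invokes dominated convergence where the paper needs only the monotonicity and homogeneity-type estimates of Proposition \ref{pro2}; both proofs are complete and correct.
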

\begin{proof}
Let $\lambda > 1$. From Proposition \ref{pro2}, we have
$$\sigma_p({x}) \leq \lambda \sigma_p({x}) \leq \sigma_p(\lambda{x}) \leq \lambda^{M}\sigma_p({x}). $$
Taking $ \lambda \rightarrow 1+$, we obtain $\displaystyle\lim_{\lambda \rightarrow 1+}\sigma_p(\lambda{x}) = \sigma_p({x}).$ So $\sigma_p$
is right continuous.\\
If $0< \lambda <1$, then we have $ \lambda^{M}\sigma_p({x}) \leq
\sigma_p(\lambda{x}) \leq \lambda \sigma_p({x})$. Taking $
\lambda \rightarrow 1-$, we obtain $\displaystyle\lim_{\lambda
\rightarrow 1-}\sigma_p(\lambda{x}) = \sigma_p({x}).$ So $\sigma_p$
is left continuous. Thus $\sigma_p$ is continuous.
\end{proof}
 Now we give some relationship between norm and modular.
\begin{pro}{\label{pro4}}
For any $x\in l(r,s, t, p; \Delta)$, we have\\
$(i)$ if $\|x\|<1$ then $\sigma_p(x)\leq \|x\|$,\\
$(ii)$ if $\|x\|>1$ then $\sigma_p(x)\geq \|x\|$,\\
$(iii)$ $\|x\|=1$ if and only if $\sigma_p(x)=1$,\\
$(iv)$ $\|x\|<1$ if and only if $\sigma_p(x)<1$,\\
$(v)$ $\|x\|>1$ if and only if $\sigma_p(x)>1$,\\
$(vi)$ if $0<\alpha<1$ and $\|x\|>\alpha$ then $\sigma_p(x)>\alpha^M$,\\
$(vii)$ if $\alpha\geq1$ and $\|x\|<\alpha$ then
$\sigma_p(x)<\alpha^M$.
\end{pro}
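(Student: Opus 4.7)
The plan is to build the entire proposition on two foundations: the continuity of the modular (Proposition \ref{pro1}) and the scaling inequalities of Proposition \ref{pro2}, together with a short lemma that the map $c\mapsto \sigma_p(x/c)$ is non-increasing. Because $c_1<c_2$ gives $x/c_1=(c_2/c_1)(x/c_2)$ with $c_2/c_1\ge 1$, Proposition \ref{pro2}(iii) yields $\sigma_p(x/c_2)\le\sigma_p(x/c_1)$. Combined with continuity of $\sigma_p$, this implies that for $x\ne 0$ the level set $\{c>0:\sigma_p(x/c)\le 1\}$ is a closed half-line $[\|x\|,\infty)$, so the infimum is attained: $\sigma_p(x/\|x\|)=1$. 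I would state this observation at the outset, since it is the engine driving parts (i)-(v).

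For part (i), assuming $\|x\|<1$, I would write $x=\|x\|\cdot(x/\|x\|)$ and exploit convexity of $\sigma_p$: since $\sigma_p(0)=0$, convexity gives $\sigma_p(\alpha y)\le\alpha\sigma_p(y)$ for $0<\alpha\le 1$, hence $\sigma_p(x)\le\|x\|\,\sigma_p(x/\|x\|)=\|x\|$. For part (ii), with $\|x\|>1$, the same convexity inequality applied to $x/\|x\|=(1/\|x\|)x$ yields $1=\sigma_p(x/\|x\|)\le \sigma_p(x)/\|x\|$, i.e.\ $\sigma_p(x)\ge\|x\|$. Parts (iii)-(v) are then easy chases: $\|x\|=1$ forces $\sigma_p(x)=\sigma_p(x/\|x\|)=1$ by attainment, and conversely $\sigma_p(x)=1$ rules out $\|x\|<1$ by (i) and $\|x\|>1$ by (ii); (iv) and (v) combine (i)-(iii) in the same spirit.

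For the scaled statements (vi) and (vii), I would reduce to (i)-(ii) by rescaling. For (vi), $\|x\|>\alpha$ with $0<\alpha<1$ gives $\|x/\alpha\|=\|x\|/\alpha>1$, so (ii) provides $\sigma_p(x/\alpha)>1$; then Proposition \ref{pro2}(i) applied with that same $\alpha$ gives $\alpha^M\sigma_p(x/\alpha)\le\sigma_p(x)$, yielding $\sigma_p(x)>\alpha^M$. For (vii), $\|x\|<\alpha$ with $\alpha\ge 1$ gives $\|x/\alpha\|<1$, so (i) yields $\sigma_p(x/\alpha)<1$, and Proposition \ref{pro2}(ii) then delivers $\sigma_p(x)\le\alpha^M\sigma_p(x/\alpha)<\alpha^M$.

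The only delicate point, and the one I would argue carefully, is the attainment $\sigma_p(x/\|x\|)=1$, because every subsequent equivalence (iii)-(v) relies on it. I expect the main obstacle is not a deep one but a bookkeeping one: one must verify continuity of $c\mapsto\sigma_p(x/c)$ at $c=\|x\|$, which follows from Proposition \ref{pro1} once we pass from $\sigma_p(\lambda y)\to\sigma_p(y)$ as $\lambda\to 1$ to continuity in the argument $c$ via $y=x/\|x\|$ and $\lambda=\|x\|/c$. Once that is nailed down, the rest of the proposition unfolds by a uniform application of convexity and the two scaling estimates.
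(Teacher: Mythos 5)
Your proposal is correct, but it is organized around a different pivot than the paper's proof. You first establish, for $x\neq 0$, the attainment-and-equality fact $\sigma_p\bigl(x/\|x\|\bigr)=1$, using the monotonicity of $c\mapsto\sigma_p(x/c)$ (from Proposition \ref{pro2}(iii)) together with both one-sided continuities of Proposition \ref{pro1}, and then you read off (i)--(v) from it; note that, as you yourself flag, the equality is not a consequence of attainment alone: left continuity of the modular gives $\sigma_p(x/\|x\|)\le 1$, and the bound $\sigma_p(x/c)>1$ for $c<\|x\|$ plus right continuity gives the reverse inequality, so both halves of Proposition \ref{pro1} are genuinely needed (and the trivial case $x=0$ should be dispatched separately). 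The paper instead avoids attainment entirely in (i) and (ii) by choosing an auxiliary $u$ strictly between $\|x\|$ and $1$ (resp. between $1$ and $\|x\|$), applying the definition of the Luxemburg norm and convexity, and letting $u\to\|x\|$; for (iii) it does not argue at all but cites Corollary 2.2 of Cui--Hudzik together with $\sigma_p\in\Delta_2^s$ and Proposition \ref{pro1}, and it derives (iv)--(vii) exactly as you do, from (i)--(iii) and the scaling estimates of Proposition \ref{pro2}. What your route buys is self-containedness: the external citation for (iii) is replaced by an explicit proof of $\sigma_p(x/\|x\|)=1$, from which (iii) and the forward directions of (iv)--(v) are immediate; what the paper's route buys is that (i) and (ii) need no attainment and hence no appeal to right continuity, only the definition of the infimum and convexity. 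Both arguments rest on the same ingredients (convexity of $\sigma_p$, Propositions \ref{pro2} and \ref{pro1}, homogeneity of the Luxemburg norm in your steps (vi)--(vii)), so your version is a valid, slightly more detailed alternative.
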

\begin{proof}
$(i)$ Suppose $\|x\|<1$. Let $u$ be a positive number such that $\|x\|<u<1$. Then by the definition of norm $\|.\|$, we have $\sigma_p\Big(\frac{x}{u}\Big)\leq 1$. Using convexity of $\sigma_p$, we have $\sigma_p(x)=\sigma_p\Big(u\frac{x}{u}\Big)<u\sigma_p\Big(\frac{x}{u}\Big)\leq u$. Since $u$ is arbitrary, this implies that $\sigma_p(x)\leq \|x\|$.\\
$(ii)$ Let $u$ be a positive number such that $\|x\|> u >1$. Then $\sigma_p\Big(\frac{x}{u}\Big)> 1$ and $1<\sigma_p\Big(\frac{x}{u}\Big)<\frac{1}{u}\sigma_p(x)$, i.e., $\sigma_p(x)>u$. Taking $u\rightarrow \|x\|_{-}$, we obtain $\sigma_p(x)\geq \|x\|$.\\
$(iii)$ Since $\sigma_p\in \Delta_2^s$, so the proof follows from Corollary $2.2$ in \cite{CUI5} and Proposition \ref{pro1}.\\
$(iv)$ and $(v)$ follows from $(i)$ and $(iii)$.\\
$(vi)$ and $(vii)$ follows from Proposition \ref{pro2}$(i)$ and $(ii)$.
\end{proof}
\begin{pro}\label{pro3}
For any $(x^m)$ be a sequence of elements of $l(r,s, t, p; \Delta)$.\\
$(i)$ If $\|x^m\|\rightarrow1$ then $\sigma_p(x^m)\rightarrow 1$ as $m\rightarrow\infty$,\\
$(ii)$ If $\|x^m\|\rightarrow0$ if and only if
$\sigma_p(x^m)\rightarrow 0$ as $m\rightarrow\infty$.
\end{pro}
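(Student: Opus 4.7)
The entire proof can be handed off to the earlier Proposition~\ref{pro4} (parts (i), (vi) and (vii)), which already packages the precise relationship between the Luxemburg norm $\|\cdot\|$ and the modular $\sigma_p$; no fresh modular estimates are required. The only thing to organise is how to squeeze $\sigma_p(x^m)$ using those one-sided inequalities.

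For part (i), assume $\|x^m\|\to 1$. Fix $\epsilon\in(0,1)$. For all sufficiently large $m$ we have $1-\epsilon<\|x^m\|<1+\epsilon$. The upper bound $\|x^m\|<1+\epsilon$ falls into the hypothesis of Proposition~\ref{pro4}(vii) (with $\alpha=1+\epsilon\geq 1$), giving $\sigma_p(x^m)<(1+\epsilon)^M$. The lower bound $\|x^m\|>1-\epsilon$ fits Proposition~\ref{pro4}(vi) (with $\alpha=1-\epsilon\in(0,1)$), yielding $\sigma_p(x^m)>(1-\epsilon)^M$. Hence $(1-\epsilon)^M<\sigma_p(x^m)<(1+\epsilon)^M$ eventually, and letting $\epsilon\to 0$ gives $\sigma_p(x^m)\to 1$.

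For the forward implication of part (ii), assume $\|x^m\|\to 0$. For large $m$ we have $\|x^m\|<1$, so Proposition~\ref{pro4}(i) gives $\sigma_p(x^m)\le \|x^m\|\to 0$. For the reverse implication, assume $\sigma_p(x^m)\to 0$ but, toward a contradiction, $\|x^m\|\not\to 0$. Then there exist $\epsilon\in(0,1)$ and a subsequence $(x^{m_k})$ with $\|x^{m_k}\|\ge\epsilon$ for all $k$. Applying Proposition~\ref{pro4}(vi) with $\alpha=\epsilon/2$ (so that $\|x^{m_k}\|>\alpha$) produces $\sigma_p(x^{m_k})>(\epsilon/2)^M>0$, contradicting $\sigma_p(x^m)\to 0$. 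Therefore $\|x^m\|\to 0$.

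There is no real obstacle: the heavy lifting (the $\Delta_2^s$ property, continuity of $\sigma_p$, and the norm/modular comparisons) has already been done in Propositions~\ref{pro2}, \ref{pro1}, and \ref{pro4}. The only minor care point is avoiding the strict inequality in Proposition~\ref{pro4}(vi) when only $\|x^{m_k}\|\ge \epsilon$ is known; shrinking $\epsilon$ to $\epsilon/2$ resolves this cleanly.
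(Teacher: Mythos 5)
Your part (i) is exactly the paper's argument: squeeze $\|x^m\|$ between $1-\epsilon$ and $1+\epsilon$ and invoke Proposition~\ref{pro4}(vi),(vii) to get $(1-\epsilon)^M<\sigma_p(x^m)<(1+\epsilon)^M$. For part (ii) you diverge from the paper in a useful way: the paper disposes of both implications in one line by noting $\sigma_p\in\Delta_2^s$ and citing Lemma~2.3 of Cui--Hudzik \cite{CUI5}, whereas you give a self-contained proof from the norm--modular comparisons already established: the forward direction from Proposition~\ref{pro4}(i) (eventually $\|x^m\|<1$, so $\sigma_p(x^m)\le\|x^m\|\to 0$), and the converse by contradiction via Proposition~\ref{pro4}(vi), replacing a non-strict bound $\|x^{m_k}\|\ge\epsilon$ by the strict one $\|x^{m_k}\|>\epsilon/2$ so that (vi) applies and yields $\sigma_p(x^{m_k})>(\epsilon/2)^M$. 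This is legitimate: Proposition~\ref{pro4}(i),(vi) are derived from Proposition~\ref{pro2}, which only uses that the exponents $p_n$ are bounded by $M$, so no appeal to the $\Delta_2^s$ condition or to the external lemma is needed. What your route buys is independence from the outside reference (and it makes explicit where the bounded-exponent hypothesis enters); what the paper's route buys is brevity and placement of the statement in the general modular-space framework where $\Delta_2^s$ is the operative hypothesis. Your argument is correct as written.
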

\begin{proof}
$(i)$ Suppose that $\|x^m\|\rightarrow1$ as $m\rightarrow\infty$. Then
for every $\epsilon\in(0,1)$ there exists $N\in \mathbb{N_{\rm 0}}$
such that $1-\epsilon<\|x^m\|< 1+\epsilon$ for all $m\geq N$. Thus
by Proposition \ref{pro4} $(vi)$ and $(vii)$, we have
$(1-\epsilon)^M<\sigma_p(x^m)< (1+\epsilon)^M$ for all $m\geq N$.
Hence $\sigma_p(x^m)\rightarrow 1$ as $m\rightarrow\infty$.\\
$(ii)$ Since  $\sigma_p\in \Delta_2^s$, so the proof follows from Lemma $2.3$ in \cite{CUI5}.
%We only prove the sufficiency. Suppose if possible $\sigma_p(x^m)\rightarrow 0$ but
%$\|x^m\|\nrightarrow0$ as $m\rightarrow\infty$. Then there is an
%$\epsilon\in (0, 1)$ and a subsequence $(x^{m_k})$ of $(x^m)$, we
%have $\|x^{m_k}\|>\epsilon$ for all $k\in \mathbb{N_{\rm 0}}$, which
%implies that $\sigma_p(x^{m_k})>\epsilon^M$ for all $k\in \mathbb{N_{\rm
%0}}$, i.e., $\sigma_p(x^m)\nrightarrow0$, a contradiction. This
%completes the proof.
\end{proof}
\begin{lem}
The space $l(r, s, t, p ; \Delta)$ is a $BK$ space.
\end{lem}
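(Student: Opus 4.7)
The plan is to leverage the isomorphism $T\colon l(r,s,t,p;\Delta)\to l(p)$ from Theorem 4.2 together with the inversion formula (4.3) and the modular--norm equivalences of Propositions 5.4 and 5.5. Since Section 5 equips $l(r,s,t,p;\Delta)$ with the Luxemburg norm $\|\cdot\|$ induced by $\sigma_p$, and since $\sigma_p(x)=\sum_n |(Tx)_n|^{p_n}$ by the very definition of the triangle $A(r,s,t;\Delta)$, the map $T$ automatically preserves the Luxemburg norm: pulling back through the infimum in the definition of $\|\cdot\|$ identifies $\|x\|_{l(r,s,t,p;\Delta)}$ with the classical Luxemburg norm of $Tx$ on $l(p)$. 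Consequently $T$ is an isometric linear bijection onto $l(p)$, which is a standard Banach space for $p_n>1$, and completeness transfers to $l(r,s,t,p;\Delta)$.

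For the $K$-space property, I would take a sequence $(x^m)\subset l(r,s,t,p;\Delta)$ with $\|x^m-x\|\to 0$ and apply Proposition 5.5(ii) to rewrite this as $\sigma_p(x^m-x)\to 0$, that is,
\[
\sum_{n=0}^{\infty}\Big|\frac{1}{r_n}\sum_{k=0}^{n} s_{n-k}t_k\,\Delta(x_k^m-x_k)\Big|^{p_n}\longrightarrow 0.
\]
Because every summand is nonnegative, each must tend to zero individually, so for every fixed $n$,
\[
y_n^m := \bigl(A(r,s,t;\Delta)x^m\bigr)_n \longrightarrow \bigl(A(r,s,t;\Delta)x\bigr)_n =: y_n.
\]
Substituting into the inversion formula (4.3) gives
\[
x_k^m = \sum_{j=0}^{k}\sum_{l=0}^{k-j}(-1)^l\,\frac{D_l^{(s)}}{t_{l+j}}\,r_j\,y_j^m,
\]
so for each fixed $k$ the coordinate $x_k^m$ is a finite linear combination of $y_0^m,\ldots,y_k^m$ whose coefficients do not depend on $m$. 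Hence $x_k^m\to x_k$ as $m\to\infty$, i.e.\ every coordinate projection $P_k$ is continuous.

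The only genuinely substantive point is the completeness claim under the Luxemburg norm, which is not directly asserted by Theorem 4.1 (that result was formulated for the paranorm $\tilde h$). The argument sidesteps this by exhibiting $T$ as a Luxemburg isometry onto $(l(p),\|\cdot\|_{\mathrm{Lux}})$, whose completeness is classical; after that, coordinate continuity is an immediate consequence of the triangular inverse (4.3), and together they yield the BK property.
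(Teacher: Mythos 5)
Your proposal is correct, and for the $K$-space half it follows essentially the same route as the paper: norm convergence is converted to modular convergence via Proposition 5.5(ii), the nonnegativity of the summands gives termwise convergence of the transforms $(A(r,s,t;\Delta)x^m)_n$, and coordinatewise convergence is then recovered through the triangular inverse --- the paper phrases this last step as ``by induction'', while you make it explicit by substituting the inversion formula (4.3), which is the same argument written out. The genuine difference is in the completeness half: the paper simply asserts that $l(r,s,t,p;\Delta)$ with the Luxemburg norm is a Banach space, whereas you justify it by noting $\sigma_p(x)=\sum_n |(Tx)_n|^{p_n}$, so the isomorphism $T$ of Theorem 4.2 is an isometry of $(l(r,s,t,p;\Delta),\|\cdot\|)$ onto $l(p)$ with its classical Luxemburg norm, and completeness transfers. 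This is a worthwhile addition, since Theorem 4.1 only establishes completeness for the paranorm $\tilde h$ and does not by itself cover the Luxemburg norm used in Section 5; your argument closes that gap cleanly, and it is valid here because $p_n>1$ (indeed $p_n\geq 1$, bounded) makes $\|\cdot\|$ a genuine norm and $l(p)$ a Banach space under its Luxemburg norm.
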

\begin{proof}
Since the space $l(r, s, t, p ; \Delta)$ equipped with the Luxemberg norm
$\|.\|$ is a Banach space, so it is enough to prove that $l(r, s, t, p ; \Delta)$ is a
$K$-space. Suppose $(x^m)\in l(r, s, t, p; \Delta)$ such that
$x^m\rightarrow0$ as $m\rightarrow\infty$. By Proposition \ref{pro3}(ii), we
have $\sigma_p(x^m)\rightarrow0$ as $m\rightarrow\infty$. This
implies that
\begin{center}
$\Big|\frac{1}{r_n}\displaystyle\sum_{k=0}^{n}s_{n-k}t_k\Delta
x_k^{m}\Big|^{p_n}\rightarrow 0$ \mbox{~as~} $m\rightarrow\infty$
\mbox{~and for each~} $n\in \mathbb{N_{\rm 0}}$.
\end{center}By induction, we have $x_k^{{m}}\rightarrow 0$ as
$m\rightarrow\infty$ for each $k\in \mathbb{N}_0$. Hence the coordinate mappings $P_k(x^m)=
x_k^{m}\rightarrow 0$ as $m\rightarrow\infty$ which implies that
$P_k$'s are continuous for each $k$.
\end{proof}

\begin{lem}{\label{lem5}}
Let $x\in l(r, s, t, p ; \Delta)$ and $(x^m)\subseteq l(r, s, t, p ;
\Delta)$. If $\sigma_p(x^m)\rightarrow \sigma_p(x)$ and
$x_k^{m}\rightarrow x_k$ as $m\rightarrow\infty$ for each $k$ then
$x^m\rightarrow x$.
\end{lem}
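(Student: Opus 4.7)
The plan is to show that $\sigma_p(x^m - x) \to 0$ and then invoke Proposition \ref{pro3}(ii) to pass from modular convergence to norm convergence. Setting $y^m_n = \frac{1}{r_n}\sum_{k=0}^{n} s_{n-k}t_k \Delta x^m_k$ and $y_n = \frac{1}{r_n}\sum_{k=0}^{n} s_{n-k}t_k \Delta x_k$, the first observation is that for each fixed $n$ the quantity $y^m_n$ is a finite linear combination of the coordinates $x^m_0, x^m_1, \ldots, x^m_n$, so the hypothesis $x^m_k \to x_k$ for each $k$ yields $y^m_n \to y_n$, and hence $|y^m_n|^{p_n} \to |y_n|^{p_n}$, for every $n$.

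Given $\epsilon > 0$, I would choose a natural number $N$ so large that $\sum_{n > N}|y_n|^{p_n} < \epsilon$, which is possible since $x \in l(r,s,t,p;\Delta)$ forces the series $\sum_n |y_n|^{p_n} = \sigma_p(x)$ to converge. The key step is to control the tail $\sum_{n > N}|y^m_n|^{p_n}$ uniformly for large $m$. This is achieved by combining the hypothesis $\sigma_p(x^m) \to \sigma_p(x)$ with the coordinatewise convergence: since $\sum_{n \leq N}|y^m_n|^{p_n} \to \sum_{n \leq N}|y_n|^{p_n}$ (a finite sum), subtracting from $\sigma_p(x^m) \to \sigma_p(x)$ gives
\begin{equation*}
\sum_{n>N}|y^m_n|^{p_n} \;\longrightarrow\; \sum_{n>N}|y_n|^{p_n} < \epsilon,
\end{equation*}
so for all sufficiently large $m$ the tail $\sum_{n>N}|y^m_n|^{p_n}$ is smaller than $2\epsilon$.

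With this uniform tail control in hand, I would split
\begin{equation*}
\sigma_p(x^m - x) \;=\; \sum_{n=0}^{N}|y^m_n - y_n|^{p_n} \;+\; \sum_{n > N}|y^m_n - y_n|^{p_n},
\end{equation*}
and estimate the tail using the elementary inequality $|a-b|^{p_n} \leq 2^M(|a|^{p_n}+|b|^{p_n})$, where $M = \max\{1,\sup_n p_n\}$. This bounds the tail by $2^M\bigl(2\epsilon + \epsilon\bigr)$ for $m$ large. For the head, pointwise convergence of $y^m_n \to y_n$ on the finite range $0 \leq n \leq N$ makes $\sum_{n=0}^N |y^m_n - y_n|^{p_n} < \epsilon$ once $m$ is large enough. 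Combining the two estimates gives $\sigma_p(x^m - x) < (3\cdot 2^M + 1)\epsilon$ eventually, so $\sigma_p(x^m - x) \to 0$.

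The main obstacle is the tail estimate, since the bare hypotheses give neither uniform summability nor dominance; the trick is to exploit cancellation of the finite head between $\sigma_p(x^m) \to \sigma_p(x)$ and the finite-sum convergence $\sum_{n\leq N}|y^m_n|^{p_n} \to \sum_{n\leq N}|y_n|^{p_n}$ to extract tail control for free. Once $\sigma_p(x^m - x) \to 0$ is established, Proposition \ref{pro3}(ii) immediately delivers $\|x^m - x\| \to 0$, completing the proof.
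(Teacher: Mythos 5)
Your proposal is correct and follows essentially the same route as the paper: choose a tail index from the summability of $\sigma_p(x)$, use the convergence $\sigma_p(x^m)\to\sigma_p(x)$ together with coordinatewise (hence headwise) convergence to control the tails of $\sigma_p(x^m)$, estimate $\sigma_p(x^m-x)$ head-plus-tail via the inequality $|a-b|^{p_n}\leq 2^M(|a|^{p_n}+|b|^{p_n})$, and finally invoke Proposition \ref{pro3}(ii) to pass from modular to norm convergence. The only difference is cosmetic bookkeeping of the $\epsilon$'s.
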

\begin{proof}
Since $x\in l(r, s, t, p ; \Delta)$ i.e., $\sigma_p(x)<\infty$, so
for a given $\epsilon>0$ there exists $n_0\in \mathbb{N}$
such that
\begin{equation}
\displaystyle
\sum_{n=n_0+1}^{\infty}\Big|\frac{1}{r_n}\displaystyle\sum_{k=0}^{n}s_{n-k}t_k\Delta
x_k\Big|^{p_n}< \frac{\epsilon}{3}\frac{1}{2^{M+1}}.
\end{equation}
Again since $\sigma_p(x^m)\rightarrow \sigma_p(x)$ and
$x_k^{m}\rightarrow x_k$ as $m\rightarrow\infty$ for each $k$, so
there exists $m_0, n_0\in \mathbb{N}$ such that for $m\geq
m_0$
\begin{equation}
\sigma_p(x^m)-\bigg(\displaystyle
\sum_{n=0}^{n_0}\Big|\frac{1}{r_n}\displaystyle\sum_{k=0}^{n}s_{n-k}t_k\Delta
x_k^{m}\Big|^{p_n}\bigg)< \sigma_p(x)-\bigg(\displaystyle
\sum_{n=0}^{n_0}\Big|\frac{1}{r_n}\displaystyle\sum_{k=0}^{n}s_{n-k}t_k\Delta
x_k\Big|^{p_n}\bigg) + \frac{\epsilon}{3}\frac{1}{2^{M}}
\end{equation}
and
\begin{equation}
\bigg(\displaystyle
\sum_{n=0}^{n_0}\Big|\frac{1}{r_n}\displaystyle\sum_{k=0}^{n}s_{n-k}t_k(\Delta
x_k^{m}-\Delta x_k)\Big|^{p_n}\bigg)< \frac{\epsilon}{3}.
\end{equation}
Thus for $m\geq m_0$, we have
\begin{align*}
\sigma_p(x^m-x)&=\displaystyle
\sum_{n=0}^{\infty}\Big|\frac{1}{r_n}\displaystyle\sum_{k=0}^{n}s_{n-k}t_k(\Delta
x_k^{m}-\Delta x_k)\Big|^{p_n}\\
&= \displaystyle
\sum_{n=0}^{n_0}\Big|\frac{1}{r_n}\displaystyle\sum_{k=0}^{n}s_{n-k}t_k(\Delta
x_k^{m}-\Delta x_k)\Big|^{p_n}+ \displaystyle
\sum_{n=n_0+1}^{\infty}\Big|\frac{1}{r_n}\displaystyle\sum_{k=0}^{n}s_{n-k}t_k(\Delta
x_k^{m}-\Delta x_k)\Big|^{p_n}\\
&< \frac{\epsilon}{3} + 2^M\Big\{\displaystyle
\sum_{n=n_0+1}^{\infty}\Big|\frac{1}{r_n}\displaystyle\sum_{k=0}^{n}s_{n-k}t_k\Delta
x_k^{m}\Big|^{p_n}+ \displaystyle
\sum_{n=n_0+1}^{\infty}\Big|\frac{1}{r_n}\displaystyle\sum_{k=0}^{n}s_{n-k}t_k\Delta
x_k\Big|^{p_n} \Big\}\\
\end{align*}
\begin{align*}
&=\frac{\epsilon}{3} + 2^M\Big\{\sigma_p(x^m)-\displaystyle
\sum_{n=0}^{n_0}\Big|\frac{1}{r_n}\displaystyle\sum_{k=0}^{n}s_{n-k}t_k\Delta
x_k^{m}\Big|^{p_n}+ \displaystyle
\sum_{n=n_0+1}^{\infty}\Big|\frac{1}{r_n}\displaystyle\sum_{k=0}^{n}s_{n-k}t_k\Delta
x_k\Big|^{p_n} \Big\}\\
&<\frac{\epsilon}{3} + 2^M\Big\{\sigma_p(x)-\displaystyle
\sum_{n=0}^{n_0}\Big|\frac{1}{r_n}\displaystyle\sum_{k=0}^{n}s_{n-k}t_k\Delta
x_k\Big|^{p_n}+ \frac{\epsilon}{3.2^M }+ \displaystyle
\sum_{n=n_0+1}^{\infty}\Big|\frac{1}{r_n}\displaystyle\sum_{k=0}^{n}s_{n-k}t_k\Delta
x_k\Big|^{p_n} \Big\}\\
& =\frac{\epsilon}{3} + 2^M\Big\{\displaystyle
\sum_{n=n_0+1}^{\infty}\Big|\frac{1}{r_n}\displaystyle\sum_{k=0}^{n}s_{n-k}t_k\Delta
x_k\Big|^{p_n}\Big\}+ \frac{\epsilon}{3} +
2^M.\frac{\epsilon}{3}\frac{1}{2^{M+1}}\\
&<\frac{\epsilon}{3} +
2^M.\frac{\epsilon}{3}\frac{1}{2^{M+1}}+\frac{\epsilon}{3} +
\frac{\epsilon}{6}= \epsilon.
\end{align*}
This shows that $\sigma_p(x^m-x)\rightarrow0$ as
$m\rightarrow\infty$. Therefore by Proposition \ref{pro3}, we have
$x^m\rightarrow x$ in norm.
\end{proof}

\begin{thm}
The space $l(r, s, t, p ; \Delta)$ has the Kadec-Klee property.
\end{thm}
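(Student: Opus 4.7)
The plan is to deduce the Kadec-Klee property directly from the preparatory machinery (Propositions on norm–modular interplay, the BK-property of the space, and in particular Lemma \ref{lem5}).

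Recall that a Banach space $Y$ has the Kadec-Klee property if, whenever $(x^m)\subseteq Y$ converges weakly to $x\in Y$ and $\|x^m\|\to\|x\|$, then $\|x^m-x\|\to 0$. So I would start by fixing such a sequence $(x^m)$ and a limit $x$ in $l(r,s,t,p;\Delta)$. The degenerate case $x=\theta$ follows immediately from Proposition \ref{pro3}(ii) since $\|x^m\|\to 0$, so assume $x\neq\theta$; by rescaling (or by the argument below applied on the unit sphere) it is enough to treat $\|x\|=1$, whence $\|x^m\|\to 1$.

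The first key step is to convert norm convergence into modular convergence. Proposition \ref{pro4}(iii) gives $\sigma_p(x)=1$, and Proposition \ref{pro3}(i) yields $\sigma_p(x^m)\to 1=\sigma_p(x)$. The second key step is to extract coordinatewise convergence from weak convergence: since $l(r,s,t,p;\Delta)$ is a BK space, each coordinate projection $P_k$ is a continuous linear functional, and hence weak convergence $x^m\rightharpoonup x$ forces $x_k^m=P_k(x^m)\to P_k(x)=x_k$ for every $k\in\mathbb{N}_0$.

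With $\sigma_p(x^m)\to\sigma_p(x)$ and $x_k^m\to x_k$ for each $k$ now in hand, Lemma \ref{lem5} applies directly and gives $x^m\to x$ in norm, which is exactly the Kadec-Klee property.

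The only step that requires genuine care is the appeal to the BK property; the potential worry is whether weak convergence of $(x^m)$ in $l(r,s,t,p;\Delta)$ really does give coordinatewise convergence, but this is automatic because the coordinate functionals $P_k\in l(r,s,t,p;\Delta)^*$ by the BK property, and any weakly convergent sequence converges when tested against elements of the continuous dual. Beyond this, everything is a bookkeeping application of Lemma \ref{lem5}, which is where all the hard modular estimates have already been carried out. I therefore expect the proof to be short.
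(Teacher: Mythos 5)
Your proposal is correct and follows essentially the same route as the paper: use Proposition \ref{pro4}(iii) and Proposition \ref{pro3}(i) to pass from $\|x\|=1$, $\|x^m\|\to 1$ to $\sigma_p(x^m)\to\sigma_p(x)$, get coordinatewise convergence from weak convergence via the coordinate functionals (BK property), and conclude with Lemma \ref{lem5}. Your added touches (the degenerate case $x=\theta$, the rescaling to the unit sphere, and the explicit appeal to the BK property rather than the paper's bare remark that weak convergence implies coordinatewise convergence) only make the same argument slightly more careful.
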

\begin{proof}
Let $x\in S(l(r, s, t, p ; \Delta))$ and $(x^m)$ be a sequence in $l(r, s, t,
p ; \Delta)$ such that $\|x^m\|\rightarrow1$ as $m\rightarrow\infty$
and $x^m\rightarrow x$ weakly as $m\rightarrow\infty$. Since
$\|x\|=1$ so by Proposition \ref{pro4}(iii), we have $\sigma_p(x)=1$. By the continuity of
the functional $\sigma_p$, we have $\sigma_p(x^m)\rightarrow
\sigma_p(x)$ as $m\rightarrow\infty$. It is known that weak
convergence implies the coordinate wise convergence, i.e.,
$x_k^m\rightarrow x_k$ as $m\rightarrow\infty$ for each $k$, so by
the Lemma \ref{lem5}, we obtain $x^m\rightarrow x$ as $m\rightarrow\infty$.
\end{proof}

\begin{thm}
The space $l(r, s, t, p ; \Delta)$ is rotund if $p_n>1$
for each $n$.
\end{thm}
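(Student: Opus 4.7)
The plan is to use the standard modular-space strategy: translate rotundity of the norm into a strict-convexity statement for the modular $\sigma_p$, and then exploit the fact that $t\mapsto |t|^{p_n}$ is strictly convex whenever $p_n>1$.

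Let $x\in S(l(r,s,t,p;\Delta))$ be arbitrary and suppose $x=\frac{y+z}{2}$ with $y,z\in S(l(r,s,t,p;\Delta))$. By Proposition \ref{pro4}$(iii)$, from $\|x\|=\|y\|=\|z\|=1$ we obtain $\sigma_p(x)=\sigma_p(y)=\sigma_p(z)=1$. Writing $u_n=(A(r,s,t;\Delta)y)_n$ and $v_n=(A(r,s,t;\Delta)z)_n$, the linearity of $A(r,s,t;\Delta)$ gives $(A(r,s,t;\Delta)x)_n=\tfrac{1}{2}(u_n+v_n)$, so
\[
1=\sigma_p(x)=\sum_{n=0}^{\infty}\Big|\tfrac{u_n+v_n}{2}\Big|^{p_n}\leq \sum_{n=0}^{\infty}\Big(\tfrac{1}{2}|u_n|^{p_n}+\tfrac{1}{2}|v_n|^{p_n}\Big)=\tfrac{1}{2}\sigma_p(y)+\tfrac{1}{2}\sigma_p(z)=1,
\]
where the inequality uses convexity of $t\mapsto |t|^{p_n}$ for $p_n>1$.

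Since equality holds in the overall chain, it must hold term-by-term: for every $n$,
\[
\Big|\tfrac{u_n+v_n}{2}\Big|^{p_n}=\tfrac{1}{2}|u_n|^{p_n}+\tfrac{1}{2}|v_n|^{p_n}.
\]
Because $p_n>1$, the function $t\mapsto |t|^{p_n}$ is strictly convex on $\mathbb{R}$ (resp.\ on $\mathbb{C}$ after reducing to real and imaginary parts), and this equality forces $u_n=v_n$. Thus $A(r,s,t;\Delta)y=A(r,s,t;\Delta)z$.

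Finally, $A(r,s,t;\Delta)$ is the product of the two triangles $A(r,s,t)$ and $\Delta$, hence itself a triangle, and in particular injective (this is also encoded in Theorem 4.2, which gives that $T\colon l(r,s,t,p;\Delta)\to l(p)$ defined by $Tx=A(r,s,t;\Delta)x$ is a linear bijection). Therefore $y=z$, which shows that $x$ is an extreme point of $S(l(r,s,t,p;\Delta))$. As $x$ was arbitrary, $l(r,s,t,p;\Delta)$ is rotund. The only delicate point is the passage from equality of the modular sums to term-by-term equality, which relies crucially on $p_n>1$; if some $p_n$ equalled $1$, strict convexity would fail and the argument would break down at precisely that coordinate.
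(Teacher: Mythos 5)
Your proposal is correct and follows essentially the same route as the paper: pass from $\|x\|=\|y\|=\|z\|=1$ to $\sigma_p(x)=\sigma_p(y)=\sigma_p(z)=1$ via Proposition \ref{pro4}$(iii)$, use convexity plus equality of the modular sums to get term-by-term equality, invoke strict convexity of $t\mapsto|t|^{p_n}$ ($p_n>1$) to equate the transforms, and recover $y=z$ from invertibility of the triangle $A(r,s,t;\Delta)$ (the paper phrases this last step as an induction, which is the same argument). Your explicit justification of the passage from equality of sums to term-by-term equality is, if anything, slightly more careful than the paper's.
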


\begin{proof}
Let $x\in S(l(r, s, t, p ; \Delta))$ and $y, z \in B(l(r, s, t, p ;
\Delta))$ with $x=\frac{y+z}{2}$. We have to show that $y=z$. Since
$\sigma_p(x)=1$ and
\begin{center}
$1=\sigma_p(x)= \sigma_p(\frac{y+z}{2})\leq
\frac{1}{2}(\sigma_p(y)+\sigma_p(z))\leq 1$,
\end{center}
we have $\sigma_p(x)=\frac{1}{2}(\sigma_p(y)+\sigma_p(z))$ and
$\sigma_p(y)=1$, $\sigma_p(z)=1$.\\
Therefore, we have
\begin{center}
$\displaystyle
\sum_{n=0}^{\infty}\Big|\frac{1}{r_n}\displaystyle\sum_{k=0}^{n}s_{n-k}t_k\Delta
x_k\Big|^{p_n}=\frac{1}{2}\sum_{n=0}^{\infty}\Big|\frac{1}{r_n}\displaystyle\sum_{k=0}^{n}s_{n-k}t_k\Delta
y_k\Big|^{p_n}+ \frac{1}{2}
\sum_{n=0}^{\infty}\Big|\frac{1}{r_n}\displaystyle\sum_{k=0}^{n}s_{n-k}t_k\Delta
z_k\Big|^{p_n} $.
\end{center}
Since $x=\frac{y+z}{2}$, we have from above
\begin{center}
$\displaystyle
\sum_{n=0}^{\infty}\Big|\frac{1}{r_n}\displaystyle\sum_{k=0}^{n}s_{n-k}t_k\frac{\Delta
y_k+ \Delta
z_k}{2}\Big|^{p_n}=\frac{1}{2}\sum_{n=0}^{\infty}\Big|\frac{1}{r_n}\displaystyle\sum_{k=0}^{n}s_{n-k}t_k\Delta
y_k\Big|^{p_n}+
\frac{1}{2}\sum_{n=0}^{\infty}\Big|\frac{1}{r_n}\displaystyle\sum_{k=0}^{n}s_{n-k}t_k\Delta
z_k\Big|^{p_n} $.
\end{center}By the strict convexity of the function $f(t)=|t|^{p_k},
p_k>1$ for each $k$, from above, we obtain for each $n$
\begin{center}
$\displaystyle
\frac{1}{2r_n}\displaystyle\sum_{k=0}^{n}s_{n-k}t_k\Delta y_k =
\frac{1}{2r_n}\displaystyle\sum_{k=0}^{n}s_{n-k}t_k\Delta z_k$.
\end{center}By induction, we obtain $y_k=z_k$ for each $k\in
\mathbb{N_{\rm{0}}}$, i.e., $y=z$.
\end{proof}
%%%%%%%%%%%%%%%%%%%%%%%%%%%%%%%%%%%%%%%%%%%%%
%%%%%%%%%%%%%%%%%%%%%%%%%%%%%%%%%%%%%%%%%%%%%%%%%%%%%%%%%%%%%%%%%%%%%%
%%%%%%%%%%%%%%%%%%%%%%%%%%%%%%%%%%%%%%%%%%%%%%%%%%%%%%%%%%%%%%%%%%%%%%%%%%%%%%%%%%%%%%

\end{document}